%20/Set/2012

\documentclass{amsart}
\usepackage{amsthm,amssymb,latexsym,amsmath}

\def \C {\mathbb{C}}

\def \DC {\widetilde{\Delta}}

\def \ddd {\mathcal{D}}
\def \dps {\displaystyle}

\def \fol {{\mathcal F}}
\def \F {{\mathcal F}}

\def \ll {\mathcal{L}}

\def \nn {\mathcal{N}}

\def \oo {\mathcal{O}}

\def \P {\mathbb{P}}
\def \pn {\mathbb{P}^n}
\def \pnt {\widetilde{\mathbb{P}}^n}

\def \sing {{\rm Sing}}
\def \singf {{\rm Sing}(\mathcal{F})}

\def \tt {\mathcal{T}}

\newtheorem{proposition}{Proposition}[section]

\newtheorem{corolary}[proposition]{Corollary}
\newtheorem{lema}[proposition]{Lemma}
\newtheorem{theorem}[proposition]{Theorem}
\newtheorem{example}[proposition]{Example}
\begin{document}

\title{Foliations by curves with curves as singularities}

\author{M.  Corr\^ea Jr. }
\address{\noindent Maur\' \i cio Corr\^ea Jr: Universidade Federal de Vi\c cosa,
Departamento de Matem\'atica, Avenida P.H. Rolfs, s/n, 36570--000,
Vi\c cosa-MG, Brazil} \email{mauricio.correa@ufv.br}

\author{A. Fern\'andez-P\'erez}
\address{Arturo Fernandez Perez \\
ICEx - UFMG \\
Departamento de Matem\'atica \\
Av. Ant\^onio Carlos 6627 \\
30123-970 Belo Horizonte MG, Brazil} \email{arturofp@mat.ufmg.br}

\author{G. Nonato Costa}
\address{Gilcione Nonato Costa \\
ICEx - UFMG \\
Departamento de Matem\'atica \\
Av. Ant\^onio Carlos 6627 \\
30123-970 Belo Horizonte MG, Brazil} \email{gilcione@mat.ufmg.br}

\author{R. Vidal Martins}
\address{Renato Vidal Martins \\
ICEx - UFMG \\
Departamento de Matem\'atica \\
Av. Ant\^onio Carlos 6627 \\
30123-970 Belo Horizonte MG, Brazil}
\email{renato@mat.ufmg.br}

\subjclass[2010]{Primary 32S65 - 58K45}
\keywords{ Holomorphic foliations - non-isolated singularities}

\maketitle

\dedicatory{\it\ \ \ \ \ \ \ \ \ \ \ \ \ Dedicated to M\'arcio Gomes Soares, for his 60th birthday}

\begin{abstract}
Let $\fol$ be a holomorphic one-dimensional foliation on $\pn$ such that the components of its singular locus $\Sigma$ are curves $C_i$ and points $p_j$. We determine the number of $p_j$, counted with multiplicities, in terms of invariants of $\fol$ and $C_i$, assuming that $\F$ is special along the $C_i$. Allowing just one nonzero dimensional component on $\Sigma$, we also prove results on when the foliation happens to be determined by its singular locus.
\end{abstract}

%\footnote{2000 Mathematics Subject Classifications: 32S65, 58K45}
%\footnote{key words: holomorphic foliations, non-isolated singularities}

\section{Introduction}

Let $\fol$ be a foliation on a smooth projective scheme $Y$, and $X$ a projective subscheme of $Y$. Let $\widetilde{Y}$ be the blowup of $Y$ along $X$, and $\pi:\widetilde{Y}\rightarrow Y$ the blowup morphism with
exceptional divisor $E:=\pi^{-1}(X)$. The foliation $\fol$
will be called {\it special } along $X$ if the strict transform $\widetilde{\fol}$ has
$E$ as an invariant set, and ${\rm Sing}(\widetilde{\fol})$ meets $E$ at isolated singularities at most. With this in mind, we prove the following.

%In this manner, $\widetilde{\fol}$
%only has isolated singularities on $\widetilde{Y}$ because the blow
%up $f$ is an isomorphism between $\widetilde{Y}-\widetilde{X}$ and
%$Y-X$. Therefore, the number of isolated singularities, counted with
%multiplicity, of $\fol$ is the difference between the total number
%of singularities de $\widetilde{\fol}$ in $\widetilde{\P}^n$ and the
%number of singularities of $\widetilde{\fol}$ on the exceptional
%divisor $\widetilde{X}$. In order to calculate the number of
%isolated singularities of $\fol$, we must associate the Chern
%classes of $c(T_{\widetilde{Y}})$ and $c(T_{Y})$. As usual, we will
%write $c(Y)$ and $c(\widetilde{Y})$ for $c(T_{Y})$ and
%$c(T_{\widetilde{Y}})$ and with Porteuos's theorem \cite{IP} and
%Baum-Bott's formula \cite{BB}, we prof the following result

\

\noindent {\bf Theorem 1.} \emph{Let $\fol$ be a holomorphic foliation by curves on
$\pn$, $n\geq 3$, of degree $k$, such that its singular locus is the disjoint union of irreducible curves $C_1,\ldots,C_r$ and points $p_1,\ldots,p_s$. Assume each $C_i$ is either smooth, or a singular set theoretic complete intersection; assume also that $\fol$ is special along each $C_i$, for $1\leq i\leq r$. Then
$$
\sum_{i=1}^{s}\mu(\fol,p_i) = 1+k+k^2+\ldots+k^n+\sum_{i=1}^r \nu(\fol,C_i)
$$
where $\mu(\fol,p_i)$ is the multiplicity of $\fol$ at $p_i$, and where for any curve $C\subset\pn$ of arithmetic genus $g$, degree $d$, with singular points, if any, $q_1,\ldots,q_l$, and along which $\fol$ is special, we set
$$
\nu(\fol,C):=(\ell+1)^{n-2}\bigg(\bigg(2g-2-\sum_{i=1}^l (b_i-1)\bigg)(\ell^2+\ell+1)+(n+1)d\ell^2-(k-1)d(n\ell+1)\bigg)
$$
with $b_i$ the number of branches of $q_i$, and $\ell:=m_C(\fol)$ the multiplicity of $\fol$ at $C$.}

\

The above result generalizes a formula by the third named author (cf. \cite{GNC}) which counts the number of isolated singularities of a foliation by curves on $\P^3$ admiting regular curves as singularities. We just relaxed the hypothesys basically allowing singular curves on the singular locus, as long as complete intersections, and consider the foliation on a generic projective space $\P^n$.

Our second task concerns stablishing conditions for when the singular locus happens to determine a foliation. In order to do so, we need another definition. Let $X$ be a projective subscheme of $\pn$ given by an ideal sheaf $\mathcal{I}_X$. We define the \emph{generating degree of $X$}, denoted ${\rm gd}(X)$, as the least integer $d>0$ such that $\mathcal{I}_X(d)$ is globally
generated, i.e., for which $X$ is a set theoretic intersection of hypersurfaces of degree at most $d$.

\

\noindent {\bf Theorem 2.} \emph{Let $\F$ be a holomorphic foliation
by curves on $\pn$, $n\geq 3$, of degree $k$, such that its singular locus has just one nonzero dimensional component, which is an integral and smooth curve $C$. Assume also that $\F$ is special along $C$. Let $\pi: \widetilde{\mathbb{P}}^n\rightarrow\pn$ be the blowup of $\pn$ along $C$ and $E$ the exceptional divisor. If $\F'$ is another foliation of degree $k$ on $\pn$, with  $k>{\rm gd}(C)$, and also
${\rm Sing}(\F)\subset{\rm Sing}(\F')$ and ${\rm Sing}(\widetilde{\F}|_{E})\subset{\rm Sing}(\widetilde{\F}'|_{E})$, then $\F'=\F$.}

\

The above result can be compared to A. Campillo and J. Olivares' \cite[Cor. 3.2]{C-O}, the proof of which, along with X. Gomez-Mont and G. Kempf's \cite{GK}, motivated the one here. In the very case of three dimensional ambient space, with additional requirements on the curve of singularities, a stronger sentence can be proved.

\

\noindent {\bf Theorem 3.}
\emph{If $\F$ and $\F'$ are holomorphic foliations by curves on $\mathbb{P}^3$, of same degree, such that
${\rm Sing}(\F')\supset{\rm Sing}(\F)=C\cup\{p_1,\ldots,p_s\}$, where $C$ is a nondegenerated integral smooth set theoretic complete intersection curve, and $\F$ is special along $C$, then $\F=\F'$.}

\

So in this case we have the desired statement on determination. This unicity problem has been studied by many authors (see \cite{AC} and the references therein) and, in this sense, the above result is a step forward on dealing with the subject when the singular locus has unexpected codimension.

\

\paragraph{\bf Acknowledgments.}
The second named author is partially supported by Fapemig-Brasil and PRPq-UFMG; the fourth named author is partially supported by the CNPq grant number 304919/2009-8.

\section{Preliminaries}

\subsection{Multiplicities along subschemes}

%Let $\fol$ be a one-dimensional holomorphic foliation such that its singular locus has exactly one irreducible component of dimension $1$.
%So one may write
%$$
%\singf = X \cup \{q_1,\ldots,q_r\}.
%$$
%This irreducible curve $X$ will be considered in two situations: it is regular or it is a set theoretic complete intersection. In the latter case, we %allow $X$ to having singularities, say
%$$
%\mbox{Sing}(X)= \{p_1,\ldots,p_s\}.
%$$

%We start based on the way \cite{SS2} states singular loci and multiplicities.

Let $\fol$ be a foliation by curves on an
$n$-dimensional smooth projective variety $Y$ over $\C$. It is determined by an injective sheaf map $\varphi:\mathcal{L}_{\F}\hookrightarrow\mathcal{T}_Y$, where $\mathcal{L}_{\F}$ is an invertible sheaf and $\mathcal{T}_Y$ is the tangent bundle, such that $\mathcal{T}_Y/\mathcal{L}_{\F}$ is torsion free. The \emph{singular locus of $\fol$} is the closed subscheme $\Sigma$ of $Y$ defined by the ideal sheaf
$$
\mathcal{I}_{\Sigma}:=F_{n-1}(\mathcal{T}_Y/\mathcal{L}_{\F})
$$
where $F_{n-1}$ stands for the fitting ideal. So one may denote and write
$$
\singf:=\Sigma={\rm Spec}\,\mathcal{O}_Y/\mathcal{I}_{\Sigma}.
$$

Now let $P\in Y$ be any point. The local ring $\oo_{Y,P}$ may not be a discrete valuation ring, but one can at least consider the $\mathfrak{m}_{P}$-adic valuation, which we denote by $v_P$, where $\mathfrak{m}_{P}$ is the maximal ideal. So one defines the \emph{multiplicity of $\F$ at $P$} as
$$
m_{P}(\F):=\min\limits_{f\in\mathcal{I}_{\Sigma,P}}\{v_P(f)\}
$$
The multiplicity of $\F$ at an irreducible subscheme of $Y$ will be the multiplicity of the foliation at its generic point. So multiplicities are well defined for irreducible components of the singular locus as well.

If $\fol$ is given by a vector field which, in a neighbourhood of $P$, is written by
\begin{equation}
\label{equcmp}
\ddd_{\F}=\ddd_{\F,P}=f_{1}\,\frac{\partial}{\partial z_{1}}+\ldots+f_{n}\,\frac{\partial}{\partial z_{n}}
\end{equation}
then we have
$$
m_{P}(\F):=\min\{v_P(f_1),\ldots,v_P(f_n)\}.
$$
If $p\in Y$ is a closed point then
$$
m_p(\fol) = {\rm length}_{\,\oo_{Y,p}}\,\frac{{\mathcal O}_{Y,p}}{(f_1,\ldots,f_n)}
$$
which agrees with the classical Milnor number. So in this case we adopt the standard notation
$$
\mu(\fol,p)=m_p(\fol).
$$

For later use, we now describe the multiplicity of $\F$ at an irreducible curve $C$ which is a component of $\singf$. By a holomorphic change of coordinates, $C$ can be locally given as
$z_1=\ldots=z_{n-1}=0$. Therefore, one may write the local sections in (\ref{equcmp}) as
\begin{equation}
\label{for40}
        f_i(z) = \dps\sum_{|a|=m_i}z_1^{a_1}\cdots z_{n-1}^{a_{n-1}}f_{i,a}(z)
\end{equation}
where $a:=(a_1,\ldots,a_{n-1})$ with $|a|:=a_1+\ldots+a_{n-1}$, and at least one among the $f_{i,a}(z)$ does not vanish in the $z_n$-axis.
One rapidly sees that the number $m_i$ in (\ref{for40}) agrees with $v_C(f_i)$ so
\begin{equation}
\label{eqummm}
m_{C}(\fol)={\rm min}\{m_1,\ldots,m_n\}.
\end{equation}
We may change coordinates and assume for the remainder that
$$
m_{n-1}\leq \ldots\leq m_1.
$$

Now we blowup $Y$ along $C$ and describe the behavior of $\fol$ under this transformation. Just in order to fix notation we recall the blowup procedure in this specific case. If $\Delta$ is an $n$-dimensional
polydisc with holomorphic coordinates $z_1,\ldots,z_n$ and $\Gamma\subset
\Delta$ is the locus $z_1=\ldots=z_{n-1}=0$, take $[y_1,\ldots,y_{n-1}]$ to be
homogeneous coordinates on $\P^{n-2}$. The blowup of $\Delta$ along $\Gamma$ is the smooth variety
$$
\DC = \lbrace (z,[y])\in \Delta \times \P^{n-2}\, |\, z_i y_j=z_jy_i\ {\rm for}\ 1\le i,j \le n-1\rbrace.
$$
The projection $\pi : \DC \rightarrow \Delta$ on the first factor is an isomorphism away from $\Gamma$, while the inverse image of a point $z\in \Gamma$ is a projective space $\P^{n-2}.$ The inverse image $E=\pi^{-1}(\Gamma)$ is the exceptional divisor of the blowup.

The standard open cover $U_j =
\lbrace [y_1,\ldots,y_{n-1}] \,|\, y_j\ne 0\rbrace$, with $1\leq j\leq n-1$, of $\P^{n-2}$ yields a cover of $\widetilde{\Delta}$ where each open set, for $1\leq j\leq n-1$, is defined by
\begin{equation}
\label{for100}
{\widetilde{U_j}}= \lbrace(z,[y]) \in \DC\ \, |\ \,
[y]\in U_j\rbrace
\end{equation}
with holomorphic coordinates $\sigma(u_1,\ldots,u_n)=(z_1,\ldots,z_n)$ given by
$$z_i=
\begin{cases}
u_i & {\rm if}\ i=j\ \mbox{or}\ i=n \\
u_i u_j & {\rm if}\ i=1,\ldots,{\widehat{j}},\ldots,n-1.
\end{cases}
$$
The coordinates $u\in \C^n$ are affine coordinates on each fiber $\pi^{-1}(p)\cong\P^{n-2}$ of $E$.

Now consider the curve $C\subset Y$. Let $\lbrace
\phi_{\lambda},U_{\lambda}\rbrace$ be a collection of local charts
covering $C$ and $\phi_{\lambda}: U_{\lambda}\to \Delta_{\lambda},$
where $\Delta_{\lambda}$ is an $n$-dimensional polydisc. One may
suppose that $\Gamma_{\lambda}=\phi_{\lambda}(C\cap U_{\lambda})$ is given
by $z_1=\ldots=z_{n-1}=0.$ Let $\pi_{\lambda}: \DC_{\lambda}\to
\Delta_{\lambda}$ be the blowup of $\Delta_{\lambda}$ along
$\Gamma_{\lambda}$. One can patch the $\pi_{\lambda}$ together and use the chart maps $\phi_{\lambda}$ to get a blowup $\widetilde{Y}$ of $Y$ along $C$ and a blowup morphism $\pi : \widetilde{Y}\to Y$. The
exceptional divisor $E$ is a fibre bundle over $C$
with fiber $\P^{n-2}$ which is naturally identified with the
projectivization $\P(\nn_{C/Y})$ of the normal bundle $\nn_{C/Y}$.

In the open set ${\widetilde U}_1$, as in
(\ref{for100}), we have
$$
\sigma(u) = (u_1,u_1u_2,\ldots,u_1u_{n-1},u_n) = (z_1,\ldots,z_n)
$$

If $i=1$ or $i=n$, since $u_i=z_i$ we get
\begin{align*}
\dot u_i &= \dps \sum_{|a|=m_i} u_1^{a_1}(u_1u_2)^{a_2}\cdots(u_1u_{n-1})^{a_{n-1}}f_{i,a}(\sigma(u)) \\
 & = u_1^{m_i}\dps\sum_{|a|=m_i}u_2^{a_2}\cdots u_{n-1}^{a_{n-1}} f_{i,a}(\sigma(u))
\end{align*}
but we may write $f_{i,a}(\sigma(u))=f_{i,a}(0,\ldots,0,u_n) +
u_1{\widetilde f}_{i,a}(u)= p_{i,a}(u_n)+ u_1
\widetilde{f}_{i,a}(u)$ and hence
\begin{equation}
\label{equ1en}
\dot u_i = u_1^{m_i} \left( \dps
\sum_{|a|=m_i}u_2^{a_2}\cdots u_{n-1}^{a_{n-1}} p_{i,a}(u_n)
+ u_1 \widetilde{f}_i(u)\right)
\end{equation}
for some functions $\widetilde{f}_i(u)$, with $i=1$ or $i=n$.

If $2\leq i\leq n-1$, since $z_i=u_1u_i$, we have that $\dot z_i=\dot u_1
u_i +u_1\dot u_i$ and thus
\begin{align}
\label{equnm2}
\dot u_i&=
u_1^{m_i-1}\bigg(\dps\sum_{|a|=m_i}u_2^{a_2}\ldots u_{n-1}^{a_{n-1}}
p_{i,a}(u_n)
\\ &\ \ \ \ \ \ \ \ \ \ \ \
-u_1^{m_1-m_i}u_i\dps\sum_{|a|=m_1}u_2^{a_2}\ldots u_{n-1}^{a_{n-1}}
p_{1,a}(u_n)+ u_1\widetilde{f}_i(u)\bigg) \nonumber
\end{align}
for some functions $\widetilde{f}_i(u)$, with $2\leq i\leq n-1$.

Combining (\ref{equ1en}) and (\ref{equnm2}) we have that $\pi^*\F$ is described by the vector field
\begin{align}
\label{for4}
\ddd_{\pi^*\F} &= u_1^{m_1}\bigg( g_1(u)+u_1\widetilde{f}_1(u)\bigg)\frac{\partial}{\partial
  u_1} +\sum_{i=2}^{n-1}u_1^{m_i-1}\bigg( h_i(u)+u_1\widetilde{f}_i(u)\bigg)\frac{\partial}{\partial u_i}\\
  &\ \ \ + u_1^{m_n}\bigg( g_n(u)
  +u_1\widetilde{f}_n(u)\bigg)\frac{\partial}{\partial
  u_n} \nonumber
\end{align}
where
$$
g_i(u):=\dps
\sum_{|a|=m_i}u_2^{a_2}\cdots u_{n-1}^{a_{n-1}}
p_{i,a}(u_n)\ \ \ \ {\rm and}\ \ \ \ h_i(u):=
g_i(u)-u_1^{m_1-m_i}u_ig_1(u)
$$
Now all points of $E$, given by $u_1=0$, are singularities of
$f^{*}\fol$. We have some ways of desingularizing, according to
the possible values of $m_i$. Furthermore, if $m_1=m_{i}$ for some
$i$, we must verify whether
\begin{equation}
\label{form3}\dps
r_{i}(u):=g_i(u) -u_i g_1(u)
\end{equation}
is identically zero or not. In this way, we may divide it in two cases,
dicritical or nondicrital curves of singularities, according to
if the exceptional divisor is or is not invariant by the
induced foliation $\widetilde{\fol}$.
\begin{itemize}
 \item Non-dicritical curve of singularities.
\end{itemize}
\begin{itemize}
\item[(i)] $m_n + 1 = m_{n-1}=\ldots=m_2$ with $r_i\not\equiv
0$ for all $2\leq i\leq n-1$ if $m_{n-1}=m_1$.
\end{itemize}
Dividing (\ref{for4})  by $u_1^{m_n}$ we
get the vector field defining $\widetilde{\fol}$ which is
\begin{equation}
\label{for5}
\ddd_{\widetilde{\fol}}= u_1^{m_1-m_n}(g_1(u)+
u_1 \widetilde{f_1}(u))\frac{\partial}{\partial
  u_1}+\dps\sum_{i=2}^{n-1}\widetilde{h}_i(u)\frac{\partial}{\partial
  u_i}+ ( g_n(u)+u_1\widetilde{f}_n(u))\frac{\partial}{\partial
  u_n}
\end{equation}
where
$$
\widetilde{h}_i(u):=g_i(u)-u_1^{m_1-m_i}u_ig_1(u)+u_1\widetilde{f}_i(u).
$$
%The expression in the other coordinate system, after dividing by the others $u_i^{m_n}$, fits with (\ref{for5}) to define a foliation
%${\widetilde \fol}$ in $\widetilde{U}$ having the exceptional divisor as an invariant set.
The singularities on $E$ are given by the roots of the system
$$
            \widetilde{h}_2(u)=
            \widetilde{h}_3(u)=\cdots=\widetilde{h}_{n-1}(u)=g_n(u)=0
$$
and (i) implies that they should be isolated, i.e., $\fol$ is special along $C$.
\begin{itemize}
\item[(ii)] $m_n + 1  \leq m_{n-1}$ with $r_{i_0}\equiv 0$ for some $2\leq i_0\leq n-1$ if $m_n + 1 = m_1$.
\end{itemize}
\noindent Dividing (\ref{for4}) by $u_1^{m_n}$ we get
\begin{align}
\label{fori2}
\ddd_{\widetilde{\fol}}&= u_1^{m_1-m_n}\big( g_1(u)+ u_1\widetilde{f_1}(u)\big)\frac{\partial}{\partial u_1}
+\dps\sum_{i=2}^{n-1}u_1^{m_i-m_n-1}\big( h_i(u)+u_1\widetilde{f}_i(u)\big) \frac{\partial}{\partial
u_i} \\
&\ \ \ \ + \big( g_n(u)+u_1\widetilde{f}_n(u)\big)\frac{\partial}{\partial
u_n} \nonumber
\end{align}
and we see that the exceptional divisor is also invariant by the foliation $\widetilde{\fol}$, but
this turn, the singularities are always nonisolated.
Furthermore, the leaves of (\ref{fori2}) when restricted to
$E$ are contained in the hyperplane given by $u_i =c_i$ for those $i$ such that $m_i-1>m_n$ or $r_i\equiv 0$, where $c_i$ is a
constant.
\begin{itemize}
\item[(iii)] $m_{n}\geq m_{n-1}$ with $r_{i_0}\not\equiv0$ for some $2\leq i_0\leq n-1$ if $m_n=m_{n-1}=m_1$.
\end{itemize}
\noindent Dividing (\ref{for4}) by $u_1^{m_{n-1}-1}$ we get
\begin{align}
\label{forii2}
\ddd_{\widetilde{\fol}} &=
u_1^{m_1-m_{n-1}+1}\bigg(g_1(u)+u_1\widetilde{f}_1(u)\bigg)\frac{\partial}{\partial
u_1}+\dps\sum_{i=2}^{n-1}u_1^{m_i-m_{n-1}}\widetilde{h}_i(u)\frac{\partial}{\partial
u_i}\\
&\ \ \ \ +u_1^{m_n-m_{n-1}+1}\bigg(g_n(u)+u_1\widetilde{f}_n(u)\bigg)\frac{\partial}{\partial
u_n} \nonumber
\end{align}
and the exceptional divisor is also invariant by the foliation
$\widetilde{\fol}$, but again with nonisolated singularities on
$\widetilde{C}$. The leaves of $\widetilde{\fol}$ on $E$
are contained in the hyperplane $u_n=c$ for a constant $c$.
\begin{itemize}
\item Dicritical curve of singularities.
\end{itemize}
\begin{itemize}
\item[(i)] $m_1=\cdots=m_n$ and $r_{i}\equiv 0$ for all $2\leq i\leq n-1$.
\end{itemize}
\noindent Dividing (\ref{for4}) by $u_1^{m_n}$ we get
\begin{equation}
\label{forbii}
\ddd_{\widetilde{\fol}}=
\bigg(g_1(u)+u_1\widetilde{f}_1(u)\bigg)\frac{\partial}{\partial
u_1}+\dps\sum_{i=2}^{n-1}\widetilde{f}_i(u)\frac{\partial}{\partial
u_i}+\bigg(g_n(u)+u_1\widetilde{f}_n(u)\bigg)\frac{\partial}{\partial
u_n}
\end{equation}
Combining this with the corresponding expression in the other
coordinate systems, we get defining  equations for a foliation
$\widetilde{\fol}$ which coincides with $f^*\fol$ outside
$E$ but this time the exceptional divisor is not an
invariant set. The foliation ${\widetilde \fol}$ is transverse to
$E$ except at the hypersurface locally given by
$g_1(u) =0$ which may or may not consist of singularities of
$\widetilde{\fol}.$
\begin{itemize}
\item[(ii)] $m_{n-1}=\ldots=m_1 < m_n$ and $r_{i}\equiv 0$ for all $2\leq i\leq n-1$.
\end{itemize}
\noindent Dividing (\ref{for4}) by $u_1^{m_1}$ we get
\begin{equation}
\label{forbiii}
 \ddd_{\widetilde{\fol}}=
\bigg( g_1(u)+u_1\widetilde{f}_1(u)\bigg)\frac{\partial}{\partial
u_1}+\dps\sum_{i=2}^{n-1}\widetilde{f}_i(u)\frac{\partial}{\partial
u_i}+u_1^{m_n-m_1}\bigg( g_n(u)+u_1\widetilde{f}_n(u)\bigg)\frac{\partial}{\partial
u_n}
\end{equation}
and the exceptional divisor is not invariant by the foliation $\widetilde{\fol}$, but the last component of the
vector field (\ref{forbiii}) vanishes on it.

Keeping the notation above, for later use we sketch what we get as follows.

\begin{lema}
\label{lemlll}
The following hold:
\begin{itemize}
\item[(i)] $m_{C}(\fol)={\rm min}\{m_1,\ldots,m_n\}$;
\item[(ii)] if $\ell$ is the integer such that
$$
\ll_{\widetilde{\fol}} \cong \pi^*\ll_{\fol} \otimes \oo_{\widetilde{Y}}(\ell E)
$$
then
\begin{equation*}
%\label{for12}
\ell=
\begin{cases}
{\rm min}\lbrace m_1, m_2-1,\ldots,m_{n-1}-1, m_n \rbrace & \mbox{ if } C \mbox{ is nondicritical} \\
{\rm min}\lbrace m_1,\ldots,m_n\rbrace & \mbox{ if } C \mbox{ is dicritical }
\end{cases}
\end{equation*}
\item[(iii)] $\fol$ is special along $C$ if and only if $m_n + 1 = m_{n-1}=\ldots=m_2$ with $r_i\not\equiv 0$ for all $2\leq i\leq n-1$ if $m_{n-1}=m_1$. In particular, $\ell=m_{C}(\fol)$ in this case.
\end{itemize}
\end{lema}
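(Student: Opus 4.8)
The plan is to read Lemma~\ref{lemlll} off the local normal form (\ref{for4}) of $\ddd_{\pi^*\F}$ together with the case discussion that precedes the statement. Part~(i) is nothing but (\ref{eqummm}): it was observed right after (\ref{for40}) that $m_i=v_C(f_i)$, so the multiplicity of $\F$ at the generic point of $C$ equals $\min\{v_C(f_1),\ldots,v_C(f_n)\}=\min\{m_1,\ldots,m_n\}$. For part~(ii) the first thing to pin down is the meaning of $\ell$: the strict transform $\widetilde\F$ is obtained from $\pi^*\F$ by dividing $\ddd_{\pi^*\F}$ by the highest power $u_1^\ell$ of the local equation $u_1$ of $E$ that divides all of its coefficients, i.e. by the order of vanishing of $\ddd_{\pi^*\F}$ along $E$; passing from the twisted field to the defining line subbundle, this division is exactly the twist $\ll_{\widetilde\F}\cong\pi^*\ll_\F\otimes\oo_{\widetilde Y}(\ell E)$, so the $\ell$ to be computed is that vanishing order. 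That $\ell$ is chart-independent and that the twist is globally well defined over the cover (\ref{for100}) I would only remark on, being routine.

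Next I would compute the $u_1$-order of each coefficient in (\ref{for4}). The coefficient of $\partial/\partial u_1$ has order exactly $m_1$, since $g_1$ does not involve $u_1$ and is not identically zero (some $p_{1,a}$ with $|a|=m_1$ is nonzero, by the definition of $m_1=v_C(f_1)$); likewise the coefficient of $\partial/\partial u_n$ has order exactly $m_n$. For $2\le i\le n-1$ the coefficient of $\partial/\partial u_i$ has order $m_i-1$ exactly when the restriction of $h_i$ to $\{u_1=0\}$ is not identically zero, and order strictly larger otherwise; that restriction equals $g_i$ when $m_i<m_1$ (hence is nonzero) and equals $r_i$ when $m_i=m_1$. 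In the dicritical case one has $m_1=\cdots=m_{n-1}$ and $r_i\equiv 0$ for all $2\le i\le n-1$, so after dividing (\ref{for4}) by $u_1^{m_1}$ --- which produces (\ref{forbii}) or (\ref{forbiii}) --- the coefficient of $\partial/\partial u_1$ is $g_1+u_1\widetilde f_1$, a unit along $E$; hence $\ell=m_1=\min\{m_1,\ldots,m_n\}$, as asserted.

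In the nondicritical case I claim $\ell=M:=\min\{m_1,m_2-1,\ldots,m_{n-1}-1,m_n\}$. The bound $\ell\ge M$ is immediate, since every coefficient has $u_1$-order at least its nominal value ($m_1$, or some $m_i-1$, or $m_n$), all of which are $\ge M$. For $\ell\le M$ it suffices that some coefficient attains order exactly $M$; this is automatic when $M\in\{m_1,m_n\}$, and otherwise $M=m_j-1$ for some $2\le j\le n-1$ and it is enough to find one such $j$ whose $h_j$ does not vanish identically on $\{u_1=0\}$. The only obstruction would be that every index realizing $M=m_j-1$ has $m_j=m_1$ and $r_j\equiv 0$; but $m_j=m_1$ forces $M=m_1-1$, and since $m_1$ is the largest of $m_2,\ldots,m_{n-1}$ while $M$ is the smallest of the $m_i-1$, this pins down $m_2=\cdots=m_{n-1}=m_1$ and then $r_i\equiv 0$ for all $2\le i\le n-1$ --- and, as $M=m_1-1<m_n$ forces $m_1\le m_n$, such a configuration is one of the two dicritical cases, contradicting the hypothesis; so $\ell=M$. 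For part~(iii), recall from the introduction that $\F$ is special along $C$ iff $E$ is invariant under $\widetilde\F$ and $\sing(\widetilde\F)$ meets $E$ in at most isolated points: in both dicritical cases $E$ is not invariant, while in nondicritical cases~(ii) and~(iii), i.e. (\ref{fori2}) and (\ref{forii2}), the leaves of $\widetilde\F|_E$ lie in hyperplanes $\{u_i=c_i\}$, resp. $\{u_n=c\}$, so the singular points of $\widetilde\F$ on $E$ are nonisolated (as recorded there); this leaves nondicritical case~(i), $m_n+1=m_{n-1}=\cdots=m_2$ with $r_i\not\equiv 0$ for $2\le i\le n-1$ when $m_{n-1}=m_1$, as the only case in which $\F$ is special along $C$ (it is, as noted after (\ref{for5})). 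In that case $m_i-1=m_n$ for $2\le i\le n-1$ and $m_1\ge m_n+1$, so part~(ii) gives $\ell=m_n=\min\{m_1,\ldots,m_n\}=m_C(\F)$.

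The only genuine work here is the bookkeeping in part~(ii): one must match every numerical pattern of the $m_i$ and every pattern of vanishing of the $r_i$ against the dicritical/nondicritical dichotomy --- reading the case list as a true partition, with the shape of case~(i) taking precedence --- so as to be sure the offending configuration, all $r_i$ trivial with $m_2=\cdots=m_{n-1}=m_1\le m_n$, never arises on the nondicritical side, which is precisely what keeps the closed formula for $\ell$ valid there. Everything else --- part~(i), the reading of $\ell$ as a twist by $\oo_{\widetilde Y}(\ell E)$, and the characterization of ``special'' --- is either already in the excerpt or an immediate consequence of it.
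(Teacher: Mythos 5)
Your proof is correct and takes essentially the same route as the paper: Lemma~\ref{lemlll} is stated there without a separate proof, as a summary of the preceding blow-up computation, and your reading of $\ell$ as the $u_1$-order of $\ddd_{\pi^*\F}$ in (\ref{for4}), together with the coefficient-by-coefficient order count and the appeal to the recorded dicritical/nondicritical case list for part~(iii), is exactly what that discussion contains. The one point the paper leaves implicit --- that in the nondicritical case the potentially offending configuration ($m_2=\cdots=m_{n-1}=m_1\le m_n$ with all $r_i\equiv 0$) is precisely the dicritical one, so the closed formula for $\ell$ cannot fail --- is supplied correctly in your part~(ii) bookkeeping.
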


\subsection{Chern classes} Now we relate cohomology groups of schemes and blowups. Let $\pi:\pnt\to \pn$, $n \ge 3$, be the blowup of $\P^n$ along a regular curve $C$, with exceptional divisor $E$. Set $\nn:=\nn_{C/\P^n}$ and $\rho:=\pi|_{E}$. Since $E \cong \P(\nn)$, recall that $A(E)$ is generated as an $A(C)$-algebra by the Chern class
$$
\zeta := c_1(\oo_{\nn}(-1))
$$
with the single relation
\begin{equation}
\label{equdch}
\zeta^{n-1} - \rho^*c_1(\nn)\zeta^{n-2}+\ldots+(-1)^{n-1}\rho^*c_{n-2}(\nn)\zeta + (-1)^{n-1}\rho^*c_{n-1}(\nn)=0.
\end{equation}
The normal bundle $\nn_{E/\pnt}$ agrees with the tautological bundle $\oo_{\nn}(-1)$, and hence
\begin{equation}
\label{equzet}
\zeta=c_1(\nn_{E/\pnt}).
\end{equation}
If $\iota :E\hookrightarrow \pnt$ is the inclusion map, we also get
\begin{equation}
\label{equslf}
\iota_{*}(\zeta^i)=(-1)^i E^{i+1}.
\end{equation}
Given that
$$
\dps\int_{E}\rho^*c_{i}(\nn)\zeta^{n-i-1} =
(-1)^{n-i-1}\dps\int_{C}c_i(\nn) = 0
$$
for $i\ge 2$, we have
\begin{align}
\label{forEE}
\int_{E}\zeta^{n-1} &= \int_{E}\rho^*c_1(\nn)\zeta^{n-2} = (-1)^{n} \int_{C}c_1(\nn) \\
                    &=(-1)^n\int_{C}c_1(\tt_{\P^n}\otimes\oo_C)-c_1(C)= \dps(-1)^n\bigg((n+1)d -
2 + 2g\bigg) \nonumber
\end{align}
where $g$ is the genus and $d$ is the degree of $C_{\rm red}$.

From Porteous Theorem (see \cite{IP}), it holds that
\begin{equation}
\label{relcc} c(\pnt) - \pi^*c(\pn) = \iota_*(\rho^*c(C)\alpha)
\end{equation}
where
\begin{equation}
\label{equpor}
\alpha =\frac{1}{\zeta}\dps\sum_{i=0}^{n-1}\big{(}1-(1-\zeta)(1+\zeta)^i\big{)}\rho^*c_{n-1-i}(\nn).
\end{equation}
We may rewrite (\ref{equpor}) taking $(1+\zeta)^i=\sum_{l=0}^{i}{{i}\choose{l}}\zeta^l$ and setting $j:=n-1-i$ as
\begin{equation}
\label{equalf}
\alpha =
\dps\sum_{j=0}^{n-1}\sum_{l=0}^{n-1-j}\bigg({{n-1-j}\choose{l}}-{{n-1-j}\choose{l+1}}\bigg)\zeta^{l}\rho^*c_j(\nn)
\end{equation}
with the convention, also for the remainder, that ${{p}\choose{q}}:= 0$ whenever $q>p$. Since $i$ does not appear in (\ref{equalf}) we reset $i:=j+l$ and write
$$
\alpha=\sum_{i=0}^{n-1}\alpha_i
$$
where
$$
\dps\alpha_i =
\sum_{j=0}^{i}\bigg({{n-1-j}\choose{i-j}}-{{n-1-j}\choose{i-j+1}}\bigg)\zeta^{i-j}
\rho^*c_j(\nn).
$$
Consequently
$$
c(\pnt)-\pi^*c(\pn)=\iota_*(\rho^*c(C)
\alpha)=\iota_*\bigg(\sum_{i=0}^{n-1} \beta_i\bigg)
$$
where
$$
\beta_i=\dps\sum_{j=0}^{i}\alpha_j \rho^*c_{i-j}(C).
$$
Then $\beta_0 = \alpha_0 = -(n-2)$ and
$\beta_i = \alpha_i + \alpha_{i-1} \rho^*c_1(C)$ for $i\ge 1$.
Now,  in order to calculate the Chern class $c(\pnt)$ we
have to compare the terms of (\ref{relcc}) of same degree.
Therefore
$$
c_i(\pnt)-\pi^*c_i(\pn)=\iota_*(\beta_{i-1}).
$$
which yields
\begin{equation}
\label{for20} c_1(\pnt) - \pi^*c_1(\pn) = \iota_*(\beta_0) =
-(n-2)E
\end{equation}
and for $i\ge 2$,
\begin{align}
\label{fcc}
c_i(\pnt) &=\pi^*c_i(\pn)+\dps\sum_{j=0}^{i-1}\bigg({{n-1-j}\choose{i-1-j}}-{{n-1-j}\choose{i-j}}\bigg)(-1)^{i-1-j}\rho^*c_j(\nn)E^{i-j} \\
                   &+\dps\sum_{j=0}^{i-2}\bigg({{n-1-j}\choose{i-2-j}}-{{n-1-j}\choose{i-j-1}}\bigg)(-1)^{i-2-j}
\rho^*c_j(\nn)\rho^*c_1(C)E^{i-1-j}.\nonumber
\end{align}

\section{Special Foliations along Regular Curves}

In this section, $\fol$ is always a holomorphic
foliation by curves on $\P^n$, $n\ge 3$, with
\begin{equation}
\label{for13} \mbox{Sing}(\fol) = \dps C \cup
\{p_1,\ldots,p_s\},
\end{equation}
where the union is disjoint, $C$ is an irreducible smooth projective curve, the $p_i$ are isolated closed points, and $\fol$ is special along $C$. This means that for the blowup $\pi:\pnt\to\P^n$ along $C$, we obtain a foliation $\widetilde{\fol}$ on $\widetilde{\P}^n$ which has only isolated singularities, and the exceptional divisor $E$ is an invariant set of $\widetilde{\fol}$.

Our goal is to compute the number of isolated
singularities of $\fol$, counted with multiplicities. We assume (\ref{for13}) for simplicity, but the general case where $\singf$ has more than one curve as a component is straight forward from this one. The case where $C$ is a singular set theoretic complete intersection is left to the following section.

We start by calculating the Chern class of the invertible sheaf $\ll_{\widetilde{\fol}}$, the tangent bundle of the foliation $\widetilde{\fol}$.
From Lemma \ref{lemlll}, it follows that
$$
\ll_{\widetilde{\fol}} \cong \pi^*\ll_{\fol} \otimes \oo_{\pnt}(\ell E)
$$
where $\ell = m_C(\fol)$. Therefore
\begin{equation}
\label{prop03} c_1(\ll_{\widetilde{\fol}}) = \pi^*c_1(\ll_{\fol}) +
\ell E
\end{equation}
The result below is the first step to get Theorem 1, announced in the Introduction.

\begin{theorem}
\label{teoA}
Let $\fol$ has degree $k$, and  multiplicity $\ell$ at $C$; let $C$ has genus $g$ and degree $d$; and let ${\rm Sing}(\widetilde{\fol}|_{E})=\{\widetilde{q}_1,\ldots,\widetilde{q}_t\}$. Then
\begin{align*}
\sum_{i=1}^t \mu(\widetilde{\F}|_{E},\widetilde{q}_i) &=(2-2g)\bigg(1+(\ell+1)+(\ell+1)^2+\ldots+(\ell+1)^{n-3}\bigg)\\
&\ \ \ +(\ell+1)^{n-2}\bigg((2-2g)(\ell+1)-(n+1)d\ell+(k-1)d(n-1)\bigg)
\end{align*}
\end{theorem}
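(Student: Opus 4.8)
The plan is to recognize $\widetilde{\fol}|_E$ as a twisted vector field on the $(n-1)$-fold $E$, to count its zeros by the top Chern class of the relevant bundle, and then to evaluate that Chern number using the presentation of the Chow ring of $E=\P(\nn)$ worked out in Section 2.2 --- where the hypothesis that $C$ is a \emph{curve} collapses all higher-order Chern terms.

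First I would reduce to a Chern number. Because $\fol$ is special along $C$, the exceptional divisor $E$ is invariant by $\widetilde\fol$ by Lemma \ref{lemlll}(iii); concretely, the $\partial/\partial u_1$-component in $(\ref{for5})$ is divisible by $u_1$. Hence the inclusion $\varphi_{\widetilde\fol}\colon\ll_{\widetilde\fol}\hookrightarrow\tt_{\pnt}$, restricted to $E$, factors through $\tt_E\subset\tt_{\pnt}|_E$ and defines a global section of $\tt_E\otimes\iota^*\ll_{\widetilde\fol}^{-1}$, where $\iota\colon E\hookrightarrow\pnt$; since $\widetilde\fol$ has isolated singularities and $E$ is a divisor, the cokernel of the induced map $\iota^*\ll_{\widetilde\fol}\to\tt_E$ is torsion free, so this section defines a foliation by curves on $E$ whose tangent sheaf is $\iota^*\ll_{\widetilde\fol}$ and whose singular scheme is exactly $\sing(\widetilde\fol|_E)=\{\widetilde q_1,\dots,\widetilde q_t\}$, a finite set. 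By the identification of $\mu$ with the local Milnor number from Section 2.1 and Poincar\'e--Hopf for sections of vector bundles,
\[
\sum_{i=1}^t\mu(\widetilde\fol|_E,\widetilde q_i)=\int_E c_{n-1}\bigl(\tt_E\otimes\iota^*\ll_{\widetilde\fol}^{-1}\bigr).
\]
Using $(\ref{prop03})$, the fact that $\ll_\fol\cong\oo_{\pn}(1-k)$ for a degree-$k$ foliation, the identity $\iota^*\oo_{\pnt}(E)\cong\oo_\nn(-1)$ --- so that $c_1(\iota^*\oo_{\pnt}(E))=\zeta$ by $(\ref{equzet})$ --- and $\pi\circ\iota=\iota_C\circ\rho$, one gets
\[
\lambda:=c_1\bigl(\iota^*\ll_{\widetilde\fol}^{-1}\bigr)=(k-1)\rho^*h-\ell\zeta ,
\]
where $h\in A^1(C)$ is the hyperplane class, $\int_C h=d$.

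Next I would compute $c(\tt_E)$. The relative Euler sequence $0\to\oo_E\to\oo_\nn(1)\otimes\rho^*\nn\to\tt_{E/C}\to0$ together with $0\to\tt_{E/C}\to\tt_E\to\rho^*\tt_C\to0$ gives $c(\tt_E)=(1-\zeta+\rho^*c_1(\nn))(1-\zeta)^{n-2}(1+\rho^*c_1(C))$. Since $C$ is a curve, $\rho^*c_j(\nn)=0$ for $j\ge2$, and $\rho^*c_1(\nn)$, $\rho^*c_1(C)$, $\rho^*h$ have vanishing squares and pairwise products; hence
\[
c(\tt_E)=(1-\zeta)^{n-1}\bigl(1+\rho^*c_1(C)\bigr)+\rho^*c_1(\nn)(1-\zeta)^{n-2},
\]
and likewise each power $\lambda^m$ retains only its $\zeta^m$ and $\rho^*h\,\zeta^{m-1}$ terms. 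Now expand $c_{n-1}(\tt_E\otimes\iota^*\ll_{\widetilde\fol}^{-1})=\sum_{i=0}^{n-1}c_i(\tt_E)\lambda^{n-1-i}$, keep the degree-$(n-1)$ part, and integrate over $E$ using $(\ref{equdch})$ in the reduced form $\zeta^{n-1}=\rho^*c_1(\nn)\zeta^{n-2}$, together with $\int_E\rho^*\omega\,\zeta^{n-2}=(-1)^n\int_C\omega$ for $\omega\in A^1(C)$ and $\int_E\zeta^{n-1}=(-1)^n((n+1)d-2+2g)$ from $(\ref{forEE})$. After collecting terms, the surviving $\zeta$-coefficients assemble into three binomial sums which telescope to $(\ell+1)^{n-1}$, to $(\ell+1)^{n-2}$, and to $\sum_{j=0}^{n-2}(\ell+1)^j=\tfrac{(\ell+1)^{n-1}-1}{\ell}$, weighted respectively by $\int_C c_1(\nn)=(n+1)d-2+2g$, by a combination of $(n+1)d-2+2g$, $(k-1)(n-1)d$ and $2-2g$, and by $\int_C c_1(C)=2-2g$. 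The geometric-sum term yields $(2-2g)\bigl(1+(\ell+1)+\cdots+(\ell+1)^{n-2}\bigr)$; peeling off its top summand to absorb $(2-2g)(\ell+1)^{n-2}$ into the $(\ell+1)^{n-2}$-term, and using $\bigl((n+1)d-2+2g\bigr)\bigl((\ell+1)^{n-2}-(\ell+1)^{n-1}\bigr)=-\ell\bigl((n+1)d-2+2g\bigr)(\ell+1)^{n-2}$ to merge the two occurrences of $(n+1)d-2+2g$, the coefficient of $(\ell+1)^{n-2}$ becomes exactly $(2-2g)(\ell+1)-(n+1)d\ell+(k-1)(n-1)d$, which is the asserted formula.

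The only genuinely delicate point is the reduction in the second paragraph: that restricting $\widetilde\fol$ to the invariant divisor $E$ really produces a foliation by curves on $E$ with tangent sheaf precisely $\iota^*\ll_{\widetilde\fol}$, and that the scheme $\sing(\widetilde\fol|_E)$ --- hence the Milnor numbers $\mu(\widetilde\fol|_E,\widetilde q_i)$ on the left-hand side --- coincides with $\sing(\widetilde\fol)\cap E$; all of this is forced once one knows $E$ is invariant and that $\widetilde\fol$ has only isolated singularities along $E$, which is exactly the ``special'' hypothesis together with Lemma \ref{lemlll}(iii). Everything after that is the Chern-class bookkeeping of the third paragraph: long but entirely routine, and kept tractable precisely because $\dim C=1$ annihilates $c_j(\nn)$ and $c_j(C)$ for $j\ge2$, reducing the general formulas $(\ref{equalf})$, $(\ref{fcc})$ of Section 2.2 to the short expressions used above.
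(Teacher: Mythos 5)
Your proposal is correct and its skeleton coincides with the paper's: both reduce the count to $\int_E c_{n-1}(\tt_E\otimes\ll_{\widetilde{\fol}}^*|_E)$ via Baum--Bott, expand $c_{n-1}$ as $\sum_i c_i(E)\,c_1(\ll_{\widetilde{\fol}}^*)^{n-1-i}$, and evaluate in $A(\P(\nn))$ using $\zeta^{n-1}=\rho^*c_1(\nn)\zeta^{n-2}$ and $\int_E\rho^*\omega\,\zeta^{n-2}=(-1)^n\int_C\omega$. The one genuine divergence is how you obtain $c(\tt_E)$: the paper derives $c_i(E)$ extrinsically, by restricting the Porteous blowup formula (\ref{fcc}) for $c(\pnt)$ to $E$ and unwinding the recursion $c_i(E)=c_i(\tt_{\pnt}\otimes\oo_E)-c_{i-1}(E)\zeta$, whereas you compute it intrinsically from the relative Euler sequence of the $\P^{n-2}$-bundle $\rho:E\to C$ together with $0\to\tt_{E/C}\to\tt_E\to\rho^*\tt_C\to0$, getting $c(\tt_E)=(1-\zeta)^{n-1}(1+\rho^*c_1(C))+\rho^*c_1(\nn)(1-\zeta)^{n-2}$. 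The two agree (in degree one this is just $\pi^*c_1(\pn)|_E=\rho^*c_1(C)+\rho^*c_1(\nn)$ from the normal bundle sequence of $C$), but your route bypasses (\ref{fcc}) entirely for this theorem and is shorter and less error-prone; the paper's route has the advantage of setting up machinery it reuses in Theorem \ref{teoB}, where one must integrate over $\pnt$ rather than over $E$. Your opening paragraph also makes explicit the reduction the paper leaves tacit, namely that invariance of $E$ plus finiteness of $\sing(\widetilde{\fol})\cap E$ forces the restricted section to define a foliation on $E$ with tangent sheaf $\iota^*\ll_{\widetilde{\fol}}$ and singular scheme $\{\widetilde q_1,\dots,\widetilde q_t\}$. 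The final bookkeeping is asserted rather than displayed, but the telescoping you describe (merging $((n+1)d-2+2g)\bigl((\ell+1)^{n-2}-(\ell+1)^{n-1}\bigr)=-\ell\bigl((n+1)d-2+2g\bigr)(\ell+1)^{n-2}$ and peeling the top term off the geometric sum) is exactly how the paper's last display closes, so I have no reservations about it.
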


%\begin{align*}
%\dps\sum_{\dps q\in \mbox{Sing}(\widetilde{\fol}|_{\widetilde{C}})}
%\mu(\fol_1,q) & =   \dps(2-2g)\bigg[\sum_{j=0}^{n-2}(\ell +1)^j
%+\ell(\ell+1)^{n-2}\bigg]\\
% & -   (n+1)d\ell(\ell+1)^{n-2}
%+(n-1)(k-1)d(\ell+1)^{n-2}
%\end{align*}

\begin{proof}
By Baum-Bott's formula \cite{BB}, we have that
$$\sum_{i=1}^t \mu(\widetilde{\F}|_{E},\widetilde{q}_i)= \int_{E}c_{n-1}(\tt_{E} \otimes \ll_{\widetilde{\fol}}^*)$$
with
$$
c_{n-1}(\tt_{E} \otimes \ll_{\widetilde{\fol}}^*) = \sum_{i=0}^{n-1}c_i(E)\cdot c_1(\ll_{\widetilde{\fol}}^*)^{n-i-1}.
$$
On the one hand,
\begin{equation}
\label{equwit}
c_i(E) = c_i(\tt_{\pnt}\otimes\oo_E)-c_{i-1}(E) \zeta
\end{equation}
and reaplying (\ref{equwit}) recursively we obtain
$$
c_i(E)=\dps\sum_{j=0}^{i}(-1)^jc_{i-j}(\tt_{\pnt}\otimes\oo_E)
\zeta^{j}.
$$
Set $\rho:=\pi|_{E}$ and $\nn:=\nn_{C/\pn}$. Then, using also (\ref{fcc}), for $i\ge 1$ we get
\begin{align*}
c_{i}(E)&=\sum_{j=0}^{i-1}(-1)^j\pi^*c_{i-j}(\pn)\zeta^j +(-1)^i
{{n-1}\choose{i}}\zeta^{i} \\
&\ \ \ +\dps\sum_{j=1}^{i-1}(-1)^{i-j-1}\bigg(1-{{n-j-1}\choose{i-j}}\bigg)\rho^*c_{j}(\nn)\zeta^{i-j} \\
&\ \ \ \dps+\sum_{j=0}^{i-2}(-1)^{i-j}\bigg(1-{{n-j-1}\choose{i-j-1}}\bigg)\rho^*c_{j}(\nn)\rho^*c_1(C)\zeta^{i-j-
1}.
\end{align*}
On the other hand, as $c_1(\ll_{\widetilde{\fol}}^*)=\pi^*c_1(\ll_{\fol}^*) -\ell E$, we have
$$c_1(\ll_{\widetilde{\fol}}^*)^{n-i-1}=\dps\sum_{l=0}^{n-i-1}{{n-i-1}\choose{l}}\pi^*c_1(\ll_{{\fol}}^*)^{l}(-\ell E)^{n-i-l-1}.$$
Passing from $E$ to $C$ one rapidly sees that
$$
\dps
\int_{E}\pi^*c_{i-j}(\pn)\pi^*c_1(\ll_{\fol}^*)^l\zeta^{n-i+j-l-1}=0\ \ \ {\rm for}\ j\leq i-2\ {\rm or}\ l\geq 1
$$
$$
\dps
\int_{E}\pi^*c_1(\ll_{\fol}^*)^l\zeta^{n-l-1}=0\ \ \ {\rm for}\ l\geq 2
$$
$$
\dps
\int_{E}\rho^*c_{j}(\nn)\pi^*c_1(\ll_{\fol}^*)^l\zeta^{n-j-l-1}=0\ \ \ {\rm for}\ j\geq 2\ {\rm or}\ l\geq 1
$$
$$
\dps
\int_{E}\rho^*c_{j}(\nn)\rho^*c_1(C)\pi^*c_1(\ll_{\fol}^*)^l\zeta^{n-j-l-2}=0\ \ \ {\rm for}\ j\geq 1\ {\rm or}\ l\geq 1
$$
and we obtain for $i \ge 1$,
\begin{align*}
\dps\int_{E}c_i(E)\cdot
c_1(\ll_{\widetilde{\fol}}^*)^{n-i-1} & = \dps(-1)^n\ell^{n-i-1}\int_{E}\pi^*c_1(\pn)\zeta^{n-2} \\
 &\ \ \ + \dps(-1)^{n-1}\ell^{n-i-1}{{n-1}\choose{i}}\int_{E}\zeta^{n-1}\\
 &\ \ \ + (-1)^n\ell^{n-i-2}{{n-1}\choose{i}}{{n-i-1}\choose{1}}\int_{E}\pi^*c_1(\ll_{\fol}^*)\zeta^{n-2}\\
 &\ \ \ +\dps(-1)^{n-1}\ell^{n-i-1}\bigg(1-{{n-2}\choose{i-1}}\bigg)\int_{E}\rho^*c_1(\nn)\zeta^{n-2}  \\
 &\ \ \ + \dps(-1)^{n-1}\ell^{n-i-1}\bigg(1-{{n-1}\choose{i-1}}\bigg)\int_{E}\rho^*c_1(C)\zeta^{n-2}.
\end{align*}
%$$
%\begin{array}{ll}
%\dps\int_{\widetilde{X}}c_m(T_{\widetilde{X}})\cdot
%c_1^{n-m-1}(T_{\widetilde{\fol}}^*) =
%\dps(-1)^{n-1}\ell^{n-m-1}\bigg[1-{{n-2}\choose{m-1}}\bigg]\int_{\widetilde{X}}g^*c_1(N)[\widetilde{X}]^{n-2}+\cr
%+\dps(-1)^{n-1}\ell^{n-m-1}{{n-1}\choose{m}}\int_{\widetilde{X}}[\widetilde{X}]^{n-1}
%+
%(-1)^n\ell^{n-m-2}{{n-1}\choose{m}}{{n-m-1}\choose{1}}\int_{\widetilde{X}}f^*c_1(T_{\widetilde{\fol}}^*)[\widetilde{X}]^{n-2}
%\cr
%+\dps(-1)^{n-1}\ell^{n-m-1}\bigg[1-{{n-1}\choose{m-1}}\bigg]\int_{\widetilde{X}}g^*c_1(X)[\widetilde{X}]^{n-2}+\dps(-1)^n\ell^{n-m-1}\int_{\widetilde{X}}f^*c_1(T_Y)\cdot[\widetilde{X}]^{n-2}
%\end{array}
%$$
Finally,
$$
\dps\int_{E}c_1(\ll_{\widetilde{\fol}}^*)^{n-1}=(-1)^{n-1}\ell^{n-1}\int_{E}\zeta^{n-1}+(-1)^n\ell^{n-2}{{n-1}\choose{1}}\int_{E}\pi^*c_1(\ll_{\fol}^*)\zeta^{n-2}.$$
Using (\ref{forEE}), it follows that
\begin{align*}
\sum_{i=1}^t \mu(\widetilde{\F}|_{E},\widetilde{q}_i) & = \dps (n+1)d\sum_{i=1}^{n-1}\ell^{n-i-1} -\dps((n+1)d-2+2g)\sum_{i=0}^{n-1}\ell^{n-i-1}{{n-1}\choose{i}} \\
&\ \ \ \ \ \ \ \ \ +\dps(k-1)d\sum_{i=0}^{n-1}\ell^{n-i-2}{{n-i-1}\choose{1}}{{n-1}\choose{i}}\\
&\ \ \ \ \ \ \ \ \ \ \ +  ((n+1)d-2+2g)\sum_{i=1}^{n-1}\ell^{n-i-1}\bigg({{n-2}\choose{i-1}}-1\bigg) \\
&\ \ \ \ \ \ \ \ \ \ \ \ \ +   \dps (2-2g)\sum_{i=1}^{n-1}\ell^{n-i-1}\bigg({{n-1}\choose{i-1}}-1\bigg)\\
&=-\dps((n+1)d-2+2g)\sum_{i=0}^{n-1}\ell^{n-i-1}{{n-1}\choose{i}}\\
&\ \ \ \ \ \ \ \ \ +\dps(k-1)d\sum_{i=0}^{n-1}\ell^{n-i-2}{{n-i-1}\choose{1}}{{n-1}\choose{i}}\\
&\ \ \ \ \ \ \ \ \ \ \ +  ((n+1)d-2+2g)\sum_{i=1}^{n-1}\ell^{n-i-1}{{n-2}\choose{i-1}} \\
&\ \ \ \ \ \ \ \ \ \ \ \ \ +   \dps (2-2g)\sum_{i=1}^{n-1}\ell^{n-i-1}{{n-1}\choose{i-1}}\\
&=-\dps((n+1)d-2+2g)(\ell+1)^{n-1}+\dps(k-1)d(n-1)(\ell+1)^{n-2}\\
&\ \ \ \ \ \ \ \ \ +  ((n+1)d-2+2g)(\ell+1)^{n-2}+ \dps (2-2g)\sum_{i=0}^{n-2}(\ell+1)^i
\end{align*}
and it is straight forward obtaining the formula stated in the theorem.
\end{proof}

\begin{example}
\label{exesp}
\emph{Let $\fol$ be a holomorphic foliation by curves of degree $k\geq 2$ on
$\P^n$, induced on the affine open set $U_0 = \{[x_0,\ldots,x_n]\in \P^n\,|\,x_0\ne 0 \}$ by the vector field
$$
\ddd_{\fol} = \dps\sum_{i=1}^{n-1}\bigg(\sum_{|a|=k}c_{i,a}z_1^{a_1}\cdots z_{n-1}^{a_{n-1}}\bigg)\frac{\partial}{\partial
z_i}+ \bigg(\sum_{|a|=k-1}z_1^{a_1}\cdots
z_{n-1}^{a_{n-1}}h_a(z)\bigg)\frac{\partial}{\partial z_n}
$$
where $z_i=x_{i}/x_{0}$, $a=(a_1,\ldots,a_{n-1})$ is a multi-index with $|a|=\sum_{i=1}^{n-1}a_i$, $a_i \ge 0$, $c_{i,a}$ are
constants and $h_a(z)=c_{0,a}'+c_{1,a}'z_1+\ldots+c_{n,a}'z_n$
a linear function. We also consider the $f_i(z)=\sum_{|a|=k}c_{i,a}z^a$ linearly independent over
$\C$.}

\emph{Let $C$ be the curve defined by $x_i = 0$ for
$i=1,\ldots,n-1$. It is a curve of singularities of $\fol$ and we
blowup $\P^n$ along it. In the open set $\widetilde{U}_1$
with coordinates $u \in \C^n$, we have the relations
$$ \sigma(u) = (u_1,u_1u_2,\ldots,u_1u_{n-1},u_n) = z \in \C^n.
$$
Therefore, $\pi^*\fol$ is generated by the vector field
\begin{align*}
\ddd_{\pi^*\fol} & =  \dps u_1^{k}\sum_{|a|=k}c_{1,a}u_2^{a_2}\cdots
u_{n-1}^{a_{n-1}}\frac{\partial}{\partial u_1} +
\sum_{i=2}^{n-1}u_1^{k-1}g_{i,a}(u)\frac{\partial}{\partial
u_i} \\
 &\ \ \ \ +   u_1^{k-1}\sum_{|a|=k-1}h_a(\sigma(u))u_2^{a_2}\cdots
u_{n-1}^{a_{n-1}}\frac{\partial}{\partial u_n},
\end{align*}
where
$$
g_{i,a}(u)=\dps\sum_{|a|=k}c_{i,a}u_2^{a_2}\cdots u_{n-1}^{a_{n-1}}-u_i\sum_{|a|=k}c_{1,a}u_2^{a_2}\cdots u_{n-1}^{a_{n-1}}.
$$
Since $m_{C}(f_i)=m_{C}(f_n)+1 =k$,
for $i=1,\ldots,n-1,$ we have that the multiplicity $\ell :=
m_C(\fol)=\mbox{tang}(\pi^*\fol,E) = k-1$. In this way, the
foliation $\widetilde{\fol}$ induced via $\pi$ is
given in $\widetilde{U}_1$ by the vector field
\begin{align*}
\ddd_{\widetilde{\fol}} & =   u_1\dps\sum_{|a|=k}c_{1,a}u_2^{a_2}\cdots u_{n-1}^{a_{n-1}}\frac{\partial}{\partial u_1}+
\sum_{i=2}^{n-1}g_{i,a}(u)\frac{\partial}{\partial u_i}\\
 &\ \ \ \ \ +\sum_{|a|=k-1}h_a(\sigma(z))u_2^{a_2}\cdots u_{n-1}^{a_{n-1}}\frac{\partial}{\partial
u_n}.
\end{align*}
It is easily seeing that on the affine open set, $u_n \in \C$,
the foliation $\widetilde{\fol}$, when restricted to the
exceptional divisor $E$, which is given by $u_1=0$, defines a
holomorphic foliation on $\P^{n-2}$ of degree $k$ and with infinite
hyperplane noninvariant. Consequently, there are
$\sum_{i=0}^{n-2}k^i$ isolated singularities on $E$ because for each
$(u_2,\ldots,u_{n-1})$ vanishing the $n-2$
first terms of $\ddd_{\widetilde{\fol}|E}$ there is a unique
$u_n$ vanishing the last term of
$\ddd_{\widetilde{\fol}}$, namely,
$\sum_{|a|=k-1}c_{0,a}'u_2^{a_2}\ldots u_{n-1}^{a_{n-1}}+u_n\sum_{|a|=k-1}c_{n,a}'u_2^{a_2}\ldots u_{n-1}^{a_{n-1}}$.
Furthermore, at the fiber $\pi^{-1}[0:0:\cdots:0:1]$ the foliation
$\widetilde{\fol}$ has $\sum_{i=0}^{n-2}k^i$ additional
singularities. Therefore, $\widetilde{\fol}$ when restricted to $E$ has $2\sum_{i=0}^{n-2}k^i$ singularities, which agrees with the number obtained by Theorem \ref{teoA} taking $\ell=k-1$, $g=0$ and $d=1$.}
\end{example}

\begin{theorem}
\label{teoB}
Let $\fol$ has degree $k$, and  multiplicity $\ell$ at $C$; let $C$ has genus $g$ and degree $d$; and let ${\rm Sing}(\widetilde{\fol})=\{\widetilde{p}_1,\ldots,\widetilde{p}_r\}$. Then
\begin{align*}
\sum_{i=1}^r \mu(\widetilde{\F},\widetilde{p}_i) &=1+k+k^2+\ldots+k^n+(2-2g)\bigg(1+(\ell+1)+\ldots+(\ell+1)^{n-3}\bigg)\\
&\ \ \ \ +(\ell+1)^{n-2}\bigg((n+1)d(\ell^2-\ell)-(2-2g)\ell^2-(k-1)d(n\ell-n+2)\bigg)
\end{align*}
\end{theorem}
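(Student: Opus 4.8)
The plan is to compute $\sum_{i=1}^r \mu(\widetilde{\F},\widetilde{p}_i)$ by applying Baum--Bott's formula to the foliation $\widetilde{\fol}$ on the smooth projective variety $\pnt$, namely
$$
\sum_{i=1}^r \mu(\widetilde{\F},\widetilde{p}_i)=\int_{\pnt}c_{n}(\tt_{\pnt}\otimes\ll_{\widetilde{\fol}}^*).
$$
Since $\fol$ is special along $C$, Lemma \ref{lemlll}(iii) gives $\ell=m_C(\fol)$ and $\ll_{\widetilde{\fol}}\cong\pi^*\ll_{\fol}\otimes\oo_{\pnt}(\ell E)$, so $c_1(\ll_{\widetilde{\fol}}^*)=\pi^*c_1(\ll_{\fol}^*)-\ell E$. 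The first step is therefore to expand $c_n(\tt_{\pnt}\otimes\ll_{\widetilde{\fol}}^*)=\sum_{j=0}^n c_j(\pnt)\,c_1(\ll_{\widetilde{\fol}}^*)^{\,n-j}$, plug in the Chern classes $c_j(\pnt)$ of the blowup in terms of $\pi^*c_j(\pn)$, the powers of $E$, and $\rho^*c_i(\nn)$, $\rho^*c_1(C)$, exactly as provided by formulas \eqref{for20} and \eqref{fcc} in the Preliminaries.

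The second step is the integration. Here the key mechanism is that the pushforward formula \eqref{equslf}, $\iota_*(\zeta^i)=(-1)^iE^{i+1}$, together with the projection formula, reduces every term involving $E$ to an integral over $E$, and then to an integral over $C$; all terms not involving $E$ integrate over $\pnt$ as they would over $\pn$ by the projection formula, producing the "ambient" contribution. Concretely, the part of $c_1(\ll_{\widetilde{\fol}}^*)^{\,n-j}$ with no factor of $E$ pairs with $\pi^*c_j(\pn)$ to give $\int_{\pn}c_j(\pn)c_1(\ll_{\fol}^*)^{\,n-j}=\mu$-type terms whose sum is the classical Baum--Bott count $1+k+k^2+\cdots+k^n$ for the degree-$k$ foliation on $\pn$ (this is where the $1+k+\cdots+k^n$ summand originates). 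Every remaining term carries at least one $E$; using \eqref{equslf} it becomes $\pm\int_E(\text{something})\zeta^{\,n-1-\ast}$, and then one invokes exactly the vanishing relations already established in the proof of Theorem \ref{teoA} — that $\int_E\rho^*c_j(\nn)(\cdots)\zeta^{\cdots}=0$ for $j\ge 2$, that $\int_E\pi^*c_1(\ll_{\fol}^*)^l(\cdots)\zeta^{\cdots}=0$ for $l\ge 2$, and so on — leaving only the surviving integrals, all expressible via \eqref{forEE} in terms of $g$, $d$, $k$, $n$.

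The third step is pure bookkeeping: collect the surviving terms, recognize the binomial sums $\sum_i\ell^{n-i}\binom{n}{i}=(\ell+1)^n$ and its shifted cousins, and simplify. In fact the cleanest route is to avoid redoing the blowup computation from scratch and instead use the additivity
$$
\int_{\pnt}c_n(\tt_{\pnt}\otimes\ll_{\widetilde{\fol}}^*)=\int_{\pnt}\big(c_n-E\cdot(\text{restriction to }E\text{ terms})\big),
$$
i.e. to relate $c_n(\tt_{\pnt}\otimes\ll_{\widetilde{\fol}}^*)$ to $\iota_*c_{n-1}(\tt_E\otimes\ll_{\widetilde{\fol}}^*|_E)$ plus $\pi^*$ of the ambient class, so that Theorem \ref{teoA} can be quoted directly: the total count splits as the ambient $\sum k^i$ plus a correction supported on $E$ that is controlled by the normal-bundle sequence $0\to\tt_E\to\tt_{\pnt}|_E\to\nn_{E/\pnt}\to 0$ with $c_1(\nn_{E/\pnt})=\zeta$. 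Matching this against the already-proved Theorem \ref{teoA} and the identity $\int_E\zeta^{n-1}=(-1)^n((n+1)d-2+2g)$ yields the stated closed form.

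The main obstacle I anticipate is not conceptual but combinatorial: keeping the signs straight through the repeated use of $\iota_*(\zeta^i)=(-1)^iE^{i+1}$ and correctly tracking which monomials in $c_1(\ll_{\widetilde{\fol}}^*)^{\,n-j}$ survive after the many vanishing relations are applied. In particular one must be careful that the terms with a single factor of $\pi^*c_1(\ll_{\fol}^*)$ (degree $k$ of the original foliation) and a single factor of $E$ do contribute — these are exactly what turns bare powers of $\ell$ into the mixed $(\ell+1)^{n-2}\big((n+1)d(\ell^2-\ell)-\cdots-(k-1)d(n\ell-n+2)\big)$ expression — whereas the analogous terms with two or more factors of either $E$-free $c_1(\ll_{\fol}^*)$ or of $\rho^*c_i(\nn)$ vanish for dimension reasons on $C$. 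Once the surviving list is pinned down, the algebra collapses via the binomial identities exactly as at the end of the proof of Theorem \ref{teoA}.
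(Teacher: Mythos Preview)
Your proposal is correct and follows essentially the same approach as the paper: apply Baum--Bott on $\pnt$, expand $c_n(\tt_{\pnt}\otimes\ll_{\widetilde{\fol}}^*)=\sum_i c_i(\pnt)\,c_1(\ll_{\widetilde{\fol}}^*)^{n-i}$ using \eqref{for20}, \eqref{fcc}, and \eqref{prop03}, reduce via \eqref{equslf} and the dimension-count vanishing relations to the handful of integrals computable from \eqref{forEE}, and then collapse the resulting binomial sums. The paper does not take your suggested shortcut of quoting Theorem~\ref{teoA} through the normal-bundle sequence on $E$; it performs the direct computation in full (separating the cases $i=0$, $i=1$, $i\geq 2$) and only afterwards subtracts the two results in Corollary~\ref{cort1r}.
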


\begin{proof}
By Baum-Bott's formula, we have that
$$
\sum_{i=1}^r \mu(\widetilde{\F},\widetilde{p}_i) =
\int_{\widetilde{\P}^n}c_n(\tt_{\widetilde{\P}^n} \otimes
\ll_{\widetilde{\fol}}^* )$$
with
$$
c_n(\tt_{\widetilde{\P}^n} \otimes \ll_{\widetilde{\fol}}^* ) = \dps \sum_{i=0}^{n}c_i(\widetilde{\P}^n)\cdot c_1(\ll_{\widetilde{\fol}}^*)^{n-i}.
$$
If $i \ge 2$, the factor $c_i(\pnt)$ is expressed by (\ref{fcc}). And from (\ref{prop03}) we get
$$
c_1(\ll_{\widetilde{\fol}}^*)^{n-i}=\dps\sum_{l=0}^{n-i}{{n-i}\choose{l}}\pi^*c_1(\ll_{{\fol}}^*)^{l}(-\ell E)^{n-i-l}.
$$
Passing from $\pnt$ to $E$ and then to $C$, one sees that
$$
\int_{\pnt}\pi^*c_{i}(\pn)\pi^*c_1(\ll_{\fol}^*)^l E^{n-i-l}=0\ \ \ {\rm for}\ l\neq n-i
$$
$$
\int_{\pnt}\rho^*c_{j}(\nn)\pi^*c_1(\ll_{\fol}^*)^l E^{n-j-l}=0\ \ \ {\rm for}\ j\geq 2\ {\rm or}\ l\geq 2
$$
$$
\int_{\pnt}\rho^*c_{j}(\nn)\rho^*c_1(C)\pi^*c_1(\ll_{\fol}^*)^l E^{n-j-l-1}=0\ \ \ {\rm for}\ j\geq 1\ {\rm or}\ l\geq 1
$$
hence
\begin{align*}
\int_{\widetilde{\P}^n} c_i(\widetilde{\P}^n) c_1(\ll_{\widetilde{\fol}}^*)^{n-i} &=
\int_{\widetilde{\P}^n}\pi^*c_i(\P^n)\pi^*c_1(\ll_{\fol}^*)^{n-i}\\
&+(-1)^{n-1}\ell^{n-i}\bigg({{n-1}\choose{i-1}}-{{n-1}\choose{i}}\bigg)\int_{\widetilde{\P}^n}E^n\\
&+(-1)^{n}\ell^{n-i-1}{{n-i}\choose{1}}\bigg({{n-1}\choose{i-1}}-{{n-1}\choose{i}}\bigg)\int_{\widetilde{\P}^n}\pi^*c_1(\ll_{\fol}^{*})E^{n-1}\\
&+(-1)^{n}\ell^{n-i}\bigg({{n-2}\choose{i-2}}-{{n-2}\choose{i-1}}\bigg)\int_{\widetilde{\P}^n}\rho^*c_1(\nn)E^{n-1}\\
&+(-1)^{n}\ell^{n-i}\bigg({{n-1}\choose{i-2}}-{{n-1}\choose{i-1}}\bigg)\int_{\widetilde{\P}^n}\rho^*c_1(C) E^{n-1}
\end{align*}
which yields
\begin{align*}
\int_{\widetilde{\P}^n} c_i(\widetilde{\P}^n) c_1(\ll_{\widetilde{\fol}}^*)^{n-i} &=
\int_{\pn}c_i(\P^n)c_1(\ll_{\fol}^*)^{n-i}\\
&+(-1)^{n-1}\ell^{n-i}\bigg({{n-1}\choose{i-1}}-{{n-1}\choose{i}}\bigg)\int_{E}\zeta^{n-1}\\
&+(-1)^{n}\ell^{n-i-1}{{n-i}\choose{1}}\bigg({{n-1}\choose{i-1}}-{{n-1}\choose{i}}\bigg)\int_{E}\pi^*c_1(\ll_{\fol}^{*})\zeta^{n-2}\\
&+(-1)^{n}\ell^{n-i}\bigg({{n-2}\choose{i-2}}-{{n-2}\choose{i-1}}\bigg)\int_{E}\rho^*c_1(\nn)\zeta^{n-2}\\
&+(-1)^{n}\ell^{n-i}\bigg({{n-1}\choose{i-2}}-{{n-1}\choose{i-1}}\bigg)\int_{E}\rho^*c_1(C) \zeta^{n-2}
\end{align*}
and thus
\begin{align*}
\int_{\widetilde{\P}^n} c_i(\widetilde{\P}^n) c_1(\ll_{\widetilde{\fol}}^*)^{n-i} &=
(k-1)^{n-i}{{n+1}\choose{i}}\\
&-\ell^{n-i}((n+1)d-2+2g)\bigg({{n-1}\choose{i-1}}-{{n-1}\choose{i}}\bigg)\\
&+\ell^{n-i-1}(k-1)d{{n-i}\choose{1}}\bigg({{n-1}\choose{i-1}}-{{n-1}\choose{i}}\bigg)\\
&+\ell^{n-i}((n+1)d-2+2g)\bigg({{n-2}\choose{i-2}}-{{n-2}\choose{i-1}}\bigg)\\
&+\ell^{n-i}(2-2g)\bigg({{n-1}\choose{i-2}}-{{n-1}\choose{i-1}}\bigg).
\end{align*}
From (\ref{for20}) and (\ref{prop03}) we have that
\begin{align*}
\int_{\widetilde{\P}^n}c_1(\widetilde{\P}^n)c_1(\ll_{\widetilde{\fol}}^*)^{n-1} &=
\int_{\widetilde{\P}^n}\pi^*c_1(\P^n)\pi^*c_1(\ll_{\fol}^*)^{n-1}+(-\ell)^{n-1}\int_{\widetilde{\P}^n}\pi^*c_1({\P}^n)E^{n-1}\\
&\ \ \ \ \ \ \ \ +(-1)^{n-1}\ell^{n}(n-1)(n-2)\int_{\widetilde{\P}^n}\pi^*c_1(\ll_{\fol}^*)E^{n-1}\\
&\ \ \ \ \ \ \ \ \ \ +(-1)^{n}\ell^{n-1}(n-2)\int_{\widetilde{\P}^n}E^n\\
&=(n+1)(k-1)^{n-1}+\ell^{n-1}(n-2)((n+1)d-2+2g)\\
&\ \ \ \ \ \ \ \ -\ell^{n-2}(n-1)(n-2)(k-1)d-\ell^{n-1}(n+1)d.
\end{align*}
Finally, from (\ref{prop03}) we get
$$
\int_{\widetilde{\P}^n}c_1(\ll_{\widetilde{\fol}}^*)^n=(k-1)^n+\ell^n((n+1)d-2+2g)-\ell^{n-1}(k-1)nd.
$$
Summing up the cases $i=0$, $i=1$ and $i\geq 2$, we obtain
\begin{align*}
\sum_{i=1}^r \mu(\widetilde{\F},\widetilde{p}_i)& = \sum_{i=0}^{n}(k-1)^{n-i}{{n+1}\choose{i}}-\ell^{n-1}(n+1)d \\
&\ \ \ \ \ \ \ \ \  -((n+1)d-2+2g)\sum_{i=0}^{n}\ell^{n-i}\bigg({{n-1}\choose{i-1}}-{{n-1}\choose{i}}\bigg)\\
&\ \ \ \ \ \ \ \ \ \ \  + (k-1)d\sum_{i=0}^{n}\ell^{n-i-1}{{n-i}\choose{1}}\bigg({{n-1}\choose{i-1}}-{{n-1}\choose{i}}\biggr)\\
&\ \ \ \ \ \ \ \ \ \ \ \ \ + ((n+1)d-2+2g)\sum_{i=2}^{n}\ell^{n-i}\bigg({{n-2}\choose{i-2}}-{{n-2}\choose{i-1}}\bigg)\\
&\ \ \ \ \ \ \ \ \ \ \ \ \ \ \ + (2-2g)\sum_{i=2}^{n}\ell^{n-i}\bigg({{n-1}\choose{i-2}}-{{n-1}\choose{i-1}}\bigg)\\
&=\sum_{i=0}^{n}k^i-\ell^{n-1}(n+1)d+(2-2g)\bigg(\sum_{i=0}^{n-2}(\ell+1)^i-(\ell+1)^{n-1}+\ell^{n-1}\bigg)\\
&-((n+1)d-2+2g)(\ell+1)^{n-2}(\ell^2-1)-(k-1)d(\ell+1)^{n-2}(n\ell-n+2)\\
&+((n+1)d-2+2g)((\ell+1)^{n-2}(1-\ell)+\ell^{n-1})
\end{align*}
and the desired formula is straight forward.
\end{proof}

As a consequence we get Theorem 1 in the case we are dealing with here.

\begin{corolary}
\label{cort1r}
Let $\fol$ has degree $k$, and multiplicity $\ell$ at $C$; let $C$ has genus $g$ and degree $d$. Then
\begin{align*}
\sum_{i=1}^{s}\mu(\fol,p_i) &=1+k+k^2+\ldots+k^n \\
&\ \ \ \ +(\ell+1)^{n-2}\bigg((2g-2)(\ell^2+\ell+1)+(n+1)d\ell^2-(k-1)d(n\ell+1)\bigg)
\end{align*}
\end{corolary}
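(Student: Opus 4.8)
The plan is to obtain the corollary from Theorems \ref{teoB} and \ref{teoA} by sorting the singularities of $\widetilde{\fol}$ on $\pnt$ into those lying off the exceptional divisor $E$ and those lying on it. First, since $\pi$ is an isomorphism over $\pn\setminus C$, it restricts to an isomorphism $\pnt\setminus E\xrightarrow{\ \sim\ }\pn\setminus C$ carrying $\sing(\widetilde{\fol})\setminus E$ onto $\sing(\fol)\setminus C=\{p_1,\ldots,p_s\}$ and identifying the corresponding local Fitting ideals; hence $\mu(\widetilde{\fol},\pi^{-1}(p_i))=\mu(\fol,p_i)$ for each $i$. On the other side, because $\fol$ is special along $C$, Lemma \ref{lemlll}(iii) puts $\ddd_{\widetilde{\fol}}$ in the local normal form (\ref{for5}) near $E=\{u_1=0\}$, with $E$ invariant; there the $\partial_{u_1}$-component vanishes identically on $E$, so a point of $E$ lies in $\sing(\widetilde{\fol})$ exactly when the remaining components vanish, i.e. exactly when it lies in $\sing(\widetilde{\fol}|_{E})=\{\widetilde q_1,\ldots,\widetilde q_t\}$.

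The heart of the argument is the local comparison
$$
\mu(\widetilde{\fol},\widetilde q_j)=\mu(\widetilde{\fol}|_{E},\widetilde q_j)\qquad(1\le j\le t).
$$
To prove it, choose coordinates $(u_1,\ldots,u_n)$ at $\widetilde q_j$ with $E=\{u_1=0\}$ and write $\ddd_{\widetilde{\fol}}=u_1A(u)\,\partial_{u_1}+\sum_{i=2}^{n}B_i(u)\,\partial_{u_i}$, so that $\ddd_{\widetilde{\fol}|_{E}}=\sum_{i=2}^{n}\bar B_i\,\partial_{u_i}$ with $\bar B_i:=B_i|_{u_1=0}$. Since $\widetilde q_j$ is an isolated singularity of $\widetilde{\fol}|_{E}$, the common zero locus of $B_2,\ldots,B_n$ meets $\{u_1=0\}$ only at $\widetilde q_j$, hence is one-dimensional, so $B_2,\ldots,B_n$ is a regular sequence in $\oo_{\pnt,\widetilde q_j}$; thus $R:=\oo_{\pnt,\widetilde q_j}/(B_2,\ldots,B_n)$ is Cohen--Macaulay of dimension one and $u_1$ is a nonzerodivisor on $R$. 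Reading off (\ref{for5}), the hypothesis that $\fol$ is special along $C$ — equivalently, via Lemma \ref{lemlll}(iii), that $\ell=m_C(\fol)$ and $\widetilde{\fol}$ has isolated singularities on $E$ — forces the coefficient of $\partial_{u_1}$ to be $u_1$ times a unit, i.e. $A$ to be a unit of $\oo_{\pnt,\widetilde q_j}$; hence $(u_1A,B_2,\ldots,B_n)=(u_1,B_2,\ldots,B_n)$ and
$$
\oo_{\pnt,\widetilde q_j}\big/(u_1A,B_2,\ldots,B_n)\;\cong\;\oo_{E,\widetilde q_j}\big/(\bar B_2,\ldots,\bar B_n),
$$
which is the asserted equality. (Equivalently, $\dim_{\C}R/(u_1A)R=\dim_{\C}R/(u_1)R+\dim_{\C}R/(A)R$, and the last summand vanishes since $A$ is a unit.)

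Summing the two kinds of contributions then yields
$$
\sum_{i=1}^{r}\mu(\widetilde{\fol},\widetilde p_i)=\sum_{i=1}^{s}\mu(\fol,p_i)+\sum_{j=1}^{t}\mu(\widetilde{\fol}|_{E},\widetilde q_j).
$$
Substituting the right-hand side of Theorem \ref{teoB} for the left-hand side, the right-hand side of Theorem \ref{teoA} for the last sum, and solving for $\sum_i\mu(\fol,p_i)$: the terms $(2-2g)\big(1+(\ell+1)+\cdots+(\ell+1)^{n-3}\big)$ present in both formulas cancel, $1+k+\cdots+k^{n}$ survives untouched, and collecting the remaining $(\ell+1)^{n-2}$-terms — grouping the $(n+1)d$-part, the $(2-2g)$-part and the $(k-1)d$-part separately — gives $(n+1)d\,\ell^{2}$, $(2g-2)(\ell^{2}+\ell+1)$ and $-(k-1)d(n\ell+1)$ respectively, which is the stated identity. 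This final step is routine polynomial bookkeeping; the one nonformal point is the local comparison of the previous paragraph, namely verifying that the transverse factor $A$ contributes nothing to the Milnor number — precisely the step where the hypothesis that $\fol$ is special along $C$ is indispensable.
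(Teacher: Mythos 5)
Your overall route is exactly the paper's: its proof of this corollary consists of the single identity
$$
\sum_{i=1}^{s}\mu(\fol,p_i)=\sum_{i=1}^r \mu(\widetilde{\F},\widetilde{p}_i)-\sum_{i=1}^t \mu(\widetilde{\F}|_{E},\widetilde{q}_i)
$$
followed by substitution of Theorems \ref{teoA} and \ref{teoB}, and your final bookkeeping reproduces this. What you add --- correctly identifying it as the only nonformal point --- is the local comparison $\mu(\widetilde{\fol},\widetilde q_j)=\mu(\widetilde{\fol}|_{E},\widetilde q_j)$, which the paper leaves entirely implicit. Your reduction of that comparison, via the regular-sequence/Cohen--Macaulay argument, to the assertion that the transverse factor $A$ is a unit at $\widetilde q_j$ is sound.

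The gap is in the sentence claiming that specialness ``forces $A$ to be a unit.'' Reading off (\ref{for5}), the coefficient of $\partial/\partial u_1$ is $u_1^{m_1-m_n}\big(g_1(u)+u_1\widetilde f_1(u)\big)$, so $A=u_1^{m_1-m_n-1}\big(g_1+u_1\widetilde f_1\big)$, and this is a unit at $\widetilde q_j$ only if both $m_1=m_n+1$ and $g_1(\widetilde q_j)\neq 0$. Neither condition is part of the characterization of specialness in Lemma \ref{lemlll}(iii): that characterization permits $m_1>m_n+1$ (in which case $A$ vanishes identically on $E$), and even when $m_1=m_n+1$ it does not by itself exclude a singularity $\widetilde q_j$ at which $g_1,\ldots,g_n$ all vanish. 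In either of those situations your own identity $\dim_{\C}R/(u_1A)R=\dim_{\C}R/(u_1)R+\dim_{\C}R/(A)R$ shows that $\mu(\widetilde{\fol},\widetilde q_j)$ strictly exceeds $\mu(\widetilde{\fol}|_{E},\widetilde q_j)$, so the subtraction identity --- and hence the stated formula --- would fail. This step therefore needs an actual argument, for instance one ruling out these degenerations using the hypothesis that ${\rm Sing}(\fol)$ is the \emph{disjoint} union $C\cup\{p_1,\ldots,p_s\}$ (so that the auxiliary zero scheme $V(g_1+u_1\widetilde f_1,\widetilde h_2,\ldots,\widetilde h_{n-1},g_n+u_1\widetilde f_n)$ cannot meet a neighborhood of $E$ off $E$), or an explicit nondegeneracy assumption. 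To be fair, the paper asserts the subtraction identity with no justification at all, so you have isolated precisely the point that is missing there as well; but as written your proof of it is incomplete.
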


\begin{proof}
Just note that
$$
\sum_{i=1}^{s}\mu(\fol,p_i)=\sum_{i=1}^r \mu(\widetilde{\F},\widetilde{p}_i)-\sum_{i=1}^t \mu(\widetilde{\F}|_{E},\widetilde{q}_i)
$$
and recall Theorems \ref{teoA} and \ref{teoB}.
\end{proof}

Let $\fol$ be as in the Example \ref{exesp}. It has no singularities in $U_0$ but the ones in $C \cap U_0$. The hyperplane $H_0 := \P^n\setminus U_0$ is isomorphic to $\P^{n-1}$ as well as invariant by $\fol$. As the degree of $\fol|_{H_0}$ remains $k$, the number of isolated singularities, counted with multiplicities, of $\fol$ on $H_0$ is $\sum_{i=0}^{n-1}k^i$. Given that the singularity $q=[0:0:\cdots 0:1] \in C$ has Milnor number
$\mu(\fol|_{H_0},q) = k^{n-1}$, it follows that $\fol$ has $\sum_{i=0}^{n-2}k^i$ isolated singularities in $\P^n$, counted
with multiplicities, which agrees with the number obtained by Corolarry \ref{cort1r} taking $\ell=k-1$, $g=0$ and $d=1$.

\section{Special Foliations along Complete Intersections}

The aim of this section is proving Theorem 1 on its full generality. In order to do so we need a result on foliations admiting a complete intersection curve in its singular locus.

\begin{lema}
\label{lema4}
Let $\fol$ be a holomorphic foliation by curves on $\P^n$, $n\ge 3$, with
$$
\label{for13} \mbox{Sing}(\fol) = C \cup
\{p_1,\ldots,p_s\},
$$
where the union is disjoint, $C$ is an irreducible singular projective curve, the $p_i$ are isolated closed points, and $\fol$ is special along $C$. Then there exists a one-parameter family of holomorphic foliations by curves on $\pn$, given by $\{\fol_t\}_{t\in D}$ where $D=\{t\in\C\,|\,|t|<\epsilon\}$ such that
\begin{enumerate}
\item[(i)] $\fol_0=\fol$;
\item[(ii)] $\deg(\fol_t)=\deg(\fol)$;
\item[(iii)] $\sing(\fol_t)=C_t\cup\{p_1^t,\ldots,p_{s_t}^t\}$, where $C_t$ is a regular irreducible projective curve with $\deg(C_t)=\deg(C)$, and the $p_i^t$ are closed points;
\item[(iv)] $\fol_t$ is special along $C_t$ and $m_{C_t}(\fol_t)=m_{C}(\fol)$;
\item[(v)] $\dps\sum_{i=1}^{s_t}\mu(\fol_t,p_i^t)=\sum_{i=1}^{s}\mu(\fol,p_i)$
\end{enumerate}
\end{lema}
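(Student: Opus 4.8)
The plan is to deform the singular curve $C$ to a smooth one while keeping the foliation's degree, the multiplicity along the curve, and the total Milnor number at the isolated points constant. The natural tool is the fact that $C$ is a set theoretic complete intersection of hypersurfaces $F_1,\ldots,F_{n-1}$ of degrees $d_1,\ldots,d_{n-1}$ (this is part of the hypothesis that $C$ is a \emph{singular set theoretic complete intersection} in Theorem 1); by a Bertini-type argument one can find a one-parameter family $F_j^t$ of hypersurfaces of the same degrees such that $C_t:=\{F_1^t=\cdots=F_{n-1}^t=0\}$ is smooth and irreducible for $t\ne 0$, with $C_0=C$, and with $\deg(C_t)=\deg(C)=d_1\cdots d_{n-1}$ constant.

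First I would write $\fol$ locally, near the generic point of $C$, in the normal form of Section 2: coordinates $z_1=\cdots=z_{n-1}=0$ cut out $C$, and $\ddd_\fol=\sum f_i\,\partial/\partial z_i$ with $f_i$ as in (\ref{for40}) and orders $m_{n-1}\le\cdots\le m_1$ along $C$; since $\fol$ is special along $C$, Lemma \ref{lemlll}(iii) tells us $m_n+1=m_{n-1}=\cdots=m_2$ with the $r_i\not\equiv0$ when $m_{n-1}=m_1$, and $\ell=m_C(\fol)=m_n$. Next I would produce the global deformation $\fol_t$ by modifying $\fol$ only in a tubular neighbourhood of $C$: using a partition of unity (or, better, by directly exhibiting $\ddd_{\fol_t}$ as a section of $\tt_{\pn}\otimes\ll_\fol$ whose coefficients are polynomials in $z$ and $t$), replace the defining data so that the vanishing locus of the ``leading'' part is now $C_t$ rather than $C$, preserving both the order of vanishing along $C_t$ and the structure that makes $\fol_t$ special along $C_t$ (i.e.\ condition (i) of Lemma \ref{lemlll}(iii) persists, being an open condition on the leading coefficients $g_i,r_i$). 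Items (i), (ii), (iv) are then immediate from the construction; (iii) follows once we check that the only components of $\sing(\fol_t)$ of positive dimension is $C_t$, which holds for small $t$ because the ``non-special'' loci and extra components are closed conditions that are empty at $t=0$.

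For item (v), the total Milnor number of the isolated singularities is the key invariant. I would argue by conservation of number (the principle of continuity of intersection multiplicities in a flat family): the scheme $\sing(\fol_t)$ decomposes as $C_t$ together with a zero-dimensional part $Z_t$, and $\sum_i\mu(\fol_t,p_i^t)=\deg Z_t$ is computed as an excess intersection number on $\pn$ of the divisors defined by the $f_i^t$, after removing the fixed-locus contribution of $C_t$. Because the family is flat over $D$, this residual number is constant in $t$; concretely one can compute it via the blowup along $C_t$ using Theorems \ref{teoA} and \ref{teoB} with the fixed invariants $k$, $\ell$, $d=\deg C_t$, and the genus of $C_t$ --- but there is a subtlety: the arithmetic genus of $C$ (singular) and the geometric genus of the smooth $C_t$ differ, precisely by $\sum(b_i-1)$-type corrections coming from the singular points $q_1,\ldots,q_l$ of $C$. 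So the cleanest route is \emph{not} to invoke the genus, but to observe directly that $\sum\mu(\fol,p_i)$ equals, by conservation of number in the flat family of zero-cycles $Z_t\cup(C_t\cap(\text{residual}))$, the limit of $\sum\mu(\fol_t,p_i^t)$ as $t\to0$; any Milnor number that ``escapes'' to the curve as $t\to0$ must land on the singular points $q_j$ of $C$, and one shows such escape does not occur because $\fol$ is special along $C$ — the blown-up foliation $\widetilde\fol$ still has only isolated singularities over a neighbourhood of each $q_j$.

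The main obstacle I expect is exactly this last point: controlling what happens over the singular points of $C$ as $C_t$ degenerates to $C$. One must rule out that some of the isolated Milnor number of $\fol_t$ gets absorbed into the curve $C_0=C$ at the points $q_j$ where $C$ fails to be smooth. The specialness hypothesis is what saves us — since $\fol$ is special along $C$, the strict transform under blowup along $C$ has $E$ invariant and only isolated singularities, so in particular the germ of $\fol$ at each $q_j$ is, after blowing up, tame enough that its contribution to the intersection number is accounted for entirely by the curve term $\nu(\fol,C)$ (which is why the $b_i-1$ corrections appear in the formula for $\nu$ in Theorem 1). Making this precise — i.e.\ proving the conservation-of-number statement with the curve as a fixed component and identifying the limit cycle — is the technical heart of the lemma; everything else (the Bertini argument for $C_t$, the persistence of degree, multiplicity and specialness) is routine.
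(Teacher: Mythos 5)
Your overall strategy --- smooth $C$ out to a nearby complete-intersection curve $C_t$ of the same degree, drag the foliation along, and obtain (v) by conservation of number --- is the same as the paper's, but the central construction is missing, and neither substitute you offer for it works. A partition of unity is not available: $\fol_t$ must be a holomorphic (in fact algebraic) foliation on $\pn$, and gluing with bump functions leaves the holomorphic category. Likewise, ``replace the defining data so that the leading part vanishes on $C_t$, preserving the orders of vanishing and specialness, these being open conditions'' is a description of the goal, not a construction: the orders $m_i$ are only semicontinuous under perturbation, specialness requires the exact coincidences $m_n+1=m_{n-1}=\cdots=m_2$ of Lemma \ref{lemlll}(iii) to survive, and a generic perturbation of a vector field vanishing on a curve has only isolated singularities, so one must exhibit a very particular deformation. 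The paper's device is the map $F_t(z)=(f_1(z)+th_1(z),\ldots,f_{n-1}(z)+th_{n-1}(z),z_n)$, where $f_1,\ldots,f_{n-1}$ cut out $C$ in an affine chart (this is where the set-theoretic complete intersection hypothesis enters); $F_t$ is a local biholomorphism off the vanishing of the upper-left $(n-1)\times(n-1)$ Jacobian minor and carries $C_t:=\{f_1+th_1=\cdots=f_{n-1}+th_{n-1}=0\}$ to the $w_n$-axis. One then \emph{defines} $\ddd_{\fol_t}$ by transporting the fixed pushforward $(F_0)_*\ddd_{\fol}$ back through $F_t$ and clears denominators via Cramer's rule to get polynomial components, extended to all of $\C^n$. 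Since $(F_t)_*\fol_t$ is then literally independent of $t$, the normal form (\ref{for40}), the multiplicities $m_i$, and the specialness criterion of Lemma \ref{lemlll}(iii) transfer verbatim to $C_t$, which yields (iii) and (iv); taking $\deg h_i\leq\deg f_i$ yields (ii) and $\deg(C_t)=\deg(C)$, and a genericity argument on the $h_i$ gives smoothness of $C_t$ for $0<|t|<\epsilon$. Without some such explicit mechanism your claims in the second paragraph are unsupported.

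Two further remarks. Your discussion of (v) correctly isolates the one genuine issue (Milnor number escaping into $C$ as $t\to 0$); the paper itself dispatches this with a one-line ``shrink $\epsilon$'', so you are no worse off there. But the arithmetic-versus-geometric genus bookkeeping and the $\sum(b_i-1)$ correction you dwell on belong to the deduction of Theorem 1 \emph{from} the lemma (the computation $2g'-2=2g-2-\sum_{i}(b_i-1)$ carried out after the proof), not to item (v), whose statement involves no genus at all.
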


\begin{proof}
Assume $C$ is given, in an affine standard chart of $\pn$, by the zeros of the polynomials $f_1,\ldots,f_{n-1}$. Take polynomials $h_1,\ldots,h_{n-1}$ and consider the holomorphic function
\begin{gather*}
\begin{matrix}
F_t\  : &  \C^n\  & \longrightarrow & \C^{n}   \\
      & z=(z_1,\ldots,z_n)  & \longmapsto     & \bigg(f_1(z)+th_1(z)\, ,\,\ldots\, ,\, f_{n-1}(z)+th_{n-1}(z)\, ,\, z_n\bigg).
\end{matrix}
\end{gather*}
For each $t\in\C$ and any $z\in\C^n$, set $M_t(z)$ to be the first $(n-1)\times(n-1)$-minor of the jacobian matrix $D_z F_t$. Define $U_t:=\C^n\setminus\{\det M_t=0\}$ and let $C_t$ be the projective closure of the common zeros locus of the $f_i+th_i$. Note that $F_t|_{U_t}$ is a local biholomorphism onto an open set $V_t\subset\C^n$ and the image of $C_t\cap U_t$ by $F_t$ is the $w_n$-axis restricted to $V_t$ so one may fix coordinates $w=F_t(z)$. In particular, we may describe the pushforward $(F_0)_*\fol$ in $V_0$, as in (\ref{for40}), by the vector field
$$
\ddd_{(F_0)_*\fol}=P_1\,\frac{\partial}{\partial w_1}+\ldots +P_n\,\frac{\partial}{\partial w_n}
$$
where
\begin{equation}
\label{equgil}
P_i(w)=\sum_{|a|=m_i}w_1^{a_1}\cdots w_{n-1}^{a_{n-1}}P_{i,a}(w)
\end{equation}
with at least one $P_{i,a}(z)$ not vanishing in the $w_n$-axis.

For each $t\in\C$, we define $\fol_t$ by the vector field
$$
\ddd_{\fol_t}=Q_1^t\,\frac{\partial}{\partial z_1}+\ldots +Q_n^t\,\frac{\partial}{\partial z_n}
$$
where the $Q_i^t$ are obtained by the system
\begin{equation}
\label{equmat}
\begin{pmatrix}
P_1\circ F_t \cr \vdots\cr P_{n}\circ F_t \end{pmatrix}
\,=\,
DF_t\,\cdot\,
\begin{pmatrix} Q_1^t \cr
\vdots\cr
Q_{n}^t \end{pmatrix}
\end{equation}
and $DF_t$ is the Jacobian matrix
$$
\begin{pmatrix}
\partial(f_1+th_1)/\partial z_1 & \ldots & \partial(f_1+th_1)/\partial z_{n-1} & \partial(f_1+th_1)/\partial z_{n} \cr
\vdots & \ddots & \vdots & \vdots \cr
\partial(f_{n-1}+th_{n-1})/\partial z_1 & \ldots & \partial (f_{n-1}+th_{n-1})/\partial z_{n-1} & \partial(f_{n-1}+th_{n-1})/\partial z_{n} \cr
0 & \ldots & 0 & 1
\end{pmatrix}.
$$
Solving the system by Cramer's rule, we have
$$
Q_i^t = \frac{\det A_i^t}{\det M_t}
$$
where one gets $A_i^t$ replacing the $i$th column of $DF_t$ by the
column vector at the left hand side of the equality (\ref{equmat}). In
particular,
$$
Q_{n}^t = \frac{P_n\circ F_t \cdot
\det M_t}{\det M_t}=P_n\circ F_t.
$$
Therefore, normalizing by the factor $\det M_t$, one may describe $\fol$ in $U_t$ by
\begin{equation}
\label{for80}
\ddd_{\fol_t} =\det A_1^t\,\frac{\partial}{\partial z_1}+\ldots+\det A_{n-1}^t\,\frac{\partial}{\partial z_{n-1}}+P_n\circ F_t\,\frac{\partial}{\partial z_n}.
\end{equation}
As the components of $\ddd_{\fol_t}$ are polynomials, using Hartogs Extension
Theorem, we can consider $\ddd_{\fol_t}$ defined in $\C^n$.

It is immediate that $\fol_0=\fol$. Besides, the $\fol_t$ were built targeting property (iv) above. In fact, the pushforwards $(F_t)_*\fol_t$ agree with $(F_0)_*\fol$ no matter is $t\in\C$, hence, by (\ref{equgil}) and Lemma \ref{lemlll}.(i),(iii), it follows that $m_{C_t}(\fol_t)=m_C(\fol)$ and $\fol_t$ special along $C_t$ for every $t\in\C$, because $\F$ is so. By construction, $F_t$ is a local biholomorphism, so $\sing(\fol_t)$ must be the disjoint union of $C_t$ and points, with $C_t$ irreducible since its image by $F_t$ is a line. Assuming $\mbox{deg}(h_i)\leq\deg(f_i)$ for $1\leq i\leq n-1$, one assures that $\deg(C_t)=\deg(C)$, and also, by (\ref{for80}), that $\deg(\fol_t)=\deg(\fol)$.

Now set $\C^{n+1}=\{(z,t)\in\C^{n}\times\C\}$. Note that the family $S:=\cup_{t\in\C}\,(C_t\cap\C^{n})$ is an algebraic surface in $\C^{n+1}$. On the other hand, singularity imposes $n$ conditions by the vanishing of the $(n-1)\times(n-1)$-minors of the jacobian matrix $DF_t(z)$ and generically determines an algebraic curve in $\C^{n+1}$. If these two varieties happen to meet, which is the case since $C$ is singular, they do generically at isolated closed points, so one may adjust the $h_i$ and find $\epsilon >0$ sufficiently small such that $C_t\cap\C^n$ is regular for $0<|t|<\epsilon$. The whole family surface $\overline{S}:=\cup_{t\in\C}\,C_t$ intersects $H:=(\pn\setminus\C^{n})\times\C$ at a curve. Change coordinates (a priori) and assume $C$ has no singular points at $H$. Since singularity is a closed algebraic condition, either finitely many points of $\overline{S}\cap H$ are singular points of their curves, or all of them are so. But this contradicts our assumption, so one may take $\epsilon$ smaller if necessary to get the whole $C_t$ regular if $0<t<\epsilon$. And one may shrink $\epsilon$ even more in order to have
$$
\sum_{i=1}^{s_t}\mu(\fol_t,p_i^t)=\sum_{i=1}^{s}\mu(\fol,p_i)
$$
as well.
\end{proof}

Now Theorem 1 is straight forward. Suppose $\singf$ has a unique nonzero dimensional component $C$. Use Lemma \ref{lema4} to pick up any $\fol'$ among
the $\{\fol_t\}_{t\in D\setminus 0}$. Apply Corollary \ref{cort1r} to $\fol'$ and get a formula for $\sum_{i=1}^{s'}\mu(\fol',p_i')$. By Lemma \ref{lema4}, this formula holds for $\sum_{i=1}^{s}\mu(\fol,p_i)$ with same $d,\ell,k$ up to the factor $2g-2$. So let $g'$ be the genus of $C'$ and let $\overline{g}$ and $\delta$ be respectively the geometric genus and the cogenus of $C$. Since there is a continuos deformation from $C$ to $C'$, and $C$ is a set theoretic complete intersection, we have
\begin{align*}
2g'-2 &=-\chi(C')=-\chi(C)+\sum_{i=1}^{l}\mu(C,q_i) \\
      &=2\overline{g}-2+2\delta -\sum_{i=1}^{l}(b_i-1) \\
      &=2(\overline{g}+\delta)-2-\sum_{i=1}^{l}(b_i-1) \\
      &=2g-2-\sum_{i=1}^{l}(b_i-1)
\end{align*}
and one gets Theorem 1 for $\fol$ in the case there is just one $C$. If there are many, say, $C_1,\ldots,C_r$, set $Y_0 = \P^n$ and take a sequence of blowups $\pi_i: Y_i \to Y_{i-1}$ centered at $C_{i}$ with exceptional divisor $E_i$. Then it is just a matter of slightly adjusting succesively the proof of Theorem \ref{teoB} just noticing that $E_i\cdot E_j=0$ if $i \ne j$ because the curves $C_i$ and $C_j$ are
disjoint.

\section{The Unicity Problem}

In this section we prove Theorems 2 and 3. For the remainder, $\fol$ is a foliation by curves on $\P^n$, $n\ge 3$, with
\begin{equation}
\label{for13} \mbox{Sing}(\fol) = \dps C \cup
\{p_1,\ldots,p_s\}
\end{equation}
where the union is disjoint, $C$ is an integral and smooth projective curve, the $p_i$ are closed points, $\fol$ is special along $C$, and $\pi:\pnt\to\P^n$ is the blowup of $\pn$ along $C$ with exceptional divisor $E$.

We also recall from the Introduction that ${\rm gd}(C)$ is the \emph{generating degree of $C$}, the least integer $d>0$ such that $\mathcal{I}_C(d)$ is globally generated.

\begin{theorem}
Let $\F'$ be a holomorphic foliation by curves on $\pn$ for which we have $\deg(\F')=\deg(\F)>{\rm gd}(C)$, and such that ${\rm Sing}(\F)\subset{\rm Sing}(\F')$, and also ${\rm Sing}(\widetilde{\F}|_{E})\subset{\rm Sing}(\widetilde{\F}'|_{E})$. Then $\F'=\F$.
\end{theorem}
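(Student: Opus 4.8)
The plan is to realize both foliations as global sections of a suitable twisted tangent sheaf and show that the containment of singular loci forces the two sections to be proportional. Recall that a foliation by curves of degree $k$ on $\pn$ corresponds, up to scalar, to a nonzero section $v\in H^0(\pn,\tt_{\pn}(k-1))$, equivalently (via the Euler sequence) to a homogeneous vector field $v=\sum_{i=0}^n a_i(z)\,\partial/\partial z_i$ with $\deg a_i=k$, modulo the radial field. The singular scheme $\singf$ is cut out by the $2\times 2$ minors of the $2\times(n+1)$ matrix whose rows are $(z_0,\dots,z_n)$ and $(a_0,\dots,a_n)$. First I would pull everything back under $\pi$. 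Since $\F$ is special along $C$, Lemma \ref{lemlll}(iii) tells us $\ell=m_C(\F)$ and gives the precise local form of $\widetilde{\F}$; in particular $\ll_{\widetilde{\F}}\cong\pi^*\ll_{\F}\otimes\oo_{\pnt}(\ell E)$ by (\ref{prop03}), so $\widetilde{\F}$ is a global section of $\tt_{\pnt}\otimes\pi^*\oo_{\pn}(k-1)\otimes\oo_{\pnt}(-\ell E)$.

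Next I would exploit the hypothesis $k>{\rm gd}(C)$. This is exactly the condition under which $\mathcal I_C(k-1)$ — or more precisely the degree-$k$ piece of the homogeneous ideal — is generated by global sections, so the linear system of degree-$k$ vector fields vanishing on $C$ is large enough to "see" $C$ set-theoretically. The key linear-algebra step is: given a point $q\in C$ and the tangent data of $\F$ there, a degree-$k$ vector field $w$ whose associated $2\times 2$ minors all vanish along $C$ must, at the generic point of $C$, be an $\oo_{C}$-combination of $z$ and of the finitely many vector fields generating $\mathcal I_C$ in that degree; the constraint $\sing\widetilde{\F}\subseteq\sing\widetilde{\F}'$ on the blowup, together with the exceptional-divisor condition $\sing(\widetilde{\F}|_E)\subseteq\sing(\widetilde{\F}'|_E)$, pins down $\F'$ along $C$ up to adding radial and ideal-generated pieces. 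Then I would set $\F''$ to be a generic linear combination $\lambda\F+\mu\F'$ and argue that $\F''$ has singular locus strictly larger than expected unless $\F'$ and $\F$ are already proportional: concretely, $\sing\F'' \supseteq \sing\F \cup \sing\F'\supseteq C\cup\{p_j\}$, and if $\F'\neq\F$ one can choose $\lambda,\mu$ so that $\F''$ acquires an extra singular point (or extra multiplicity along $C$), contradicting the Baum–Bott count — here Theorem \ref{teoB} (equivalently Corollary \ref{cort1r}) fixes $\sum\mu(\F'',p_j)$ rigidly in terms of $k,d,g,\ell$, leaving no room for a surplus. This is the mechanism of Gomez-Mont–Kempf \cite{GK} and Campillo–Olivares \cite{C-O} adapted to the non-isolated setting.

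Concretely the steps are: (1) translate $\F,\F'$ into homogeneous vector fields $v,v'$ of degree $k$; (2) show $v'-v$, after subtracting a suitable multiple of the radial field, vanishes on $C$ to order $\geq\ell$ in the sense that its strict transform vanishes on $E$, using that $\F$ and $\F'$ are special and the minors of $v$ and $v'$ agree up to $\mathcal I_C$; (3) invoke $k>{\rm gd}(C)$ to write $v'-v$ as an explicit combination of the generators of $\mathcal I_C$ in degree $k$ times linear forms, i.e.\ $v'-v\in H^0(\tt_{\pnt}\otimes\pi^*\oo(k-1)\otimes\oo(-\ell E))$ has the same leading behavior as $v$; (4) form the pencil $\langle v,v'\rangle$ and use the blow-up singularity count of Theorem \ref{teoB} to conclude that a generic member would have too many singularities unless the pencil is trivial, i.e.\ $v'=cv$ for a constant $c$, whence $\F'=\F$ as foliations. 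The main obstacle I anticipate is step (2)–(3): making rigorous the passage from "the singular schemes are nested on $\pnt$ and on $E$" to "the difference of the defining vector fields lies in the expected twist of $\mathcal I_C$". One has to control how the $2\times 2$ minor ideal of $v'$ compares to that of $v$ not just as sets but with the right infinitesimal/multiplicity data along $C$ and transversally on $E$, and then convert that into a statement in the homogeneous coordinate ring, which is precisely where the generating degree hypothesis is forced to enter and where the argument is most delicate.
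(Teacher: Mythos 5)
There is a genuine gap, and it sits exactly at the step you rely on to close the argument. Your step (4) --- form the pencil $\langle v,v'\rangle$ and derive a contradiction from the Baum--Bott count of Theorem \ref{teoB} / Corollary \ref{cort1r} --- does not work. Baum--Bott only fixes the \emph{total} $\int c_n(\tt_{\pnt}\otimes\ll_{\widetilde{\F}}^*)$, which is the same for every degree-$k$ foliation; if a member $\F''$ of the pencil acquires an extra isolated singular point, the budget is simply rebalanced (multiplicities at the old points drop, or the new point fits inside the fixed total), so no contradiction arises. Indeed the hypothesis is only ${\rm Sing}(\F)\subset{\rm Sing}(\F')$, so a priori $\F'$ itself could have extra singularities; ruling that out is the content of the theorem and cannot be extracted from a count that is insensitive to how the total is distributed. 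Moreover, to even apply Corollary \ref{cort1r} to a generic $\F''$ you would need to know that $\F''$ is special along $C$ with the same $\ell$ and with no other positive-dimensional components, none of which is given. This is also a mischaracterization of the Gomez-Mont--Kempf / Campillo--Olivares mechanism: their argument is not a pencil-plus-counting argument but a cohomological one, showing that $H^0$ of the ideal sheaf of the singular scheme twisted by the relevant bundle is one-dimensional via the Koszul complex of the section.

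That cohomological route is what the paper actually does, and it is also the missing mechanism in your steps (2)--(3). One works with $s_{\widetilde{\F}},s_{\widetilde{\F}'}\in H^0(\pi^*\tt_{\pn}\otimes\mathcal{O}_{\pnt}(k-1)\otimes\mathcal{O}_{\pnt}(-E))$ (here $\ell=1$ since $C$ is smooth, integral and $\F$ is special along it), checks simplicity of $\pi^*\tt_{\pn}\otimes\mathcal{O}_{\pnt}(-E)$, and establishes the Koszul-type vanishings $H^p(\pi^*\Omega^q_{\pn}\otimes\pi^*\tt_{\pn}\otimes\pi^*\mathcal{O}_{\pn}((1-q)(k-1))\otimes\mathcal{O}_{\pnt}((q-1)E))=0$ for $2\le q\le n$, $p=q-1,q-2$. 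For $q\le n-1$ this follows from the Bertram--Ein--Lazarsfeld isomorphism $H^i(\pi^*F\otimes\mathcal{O}_{\pnt}(tE))\simeq H^i(F)$ together with the Gomez-Mont--Kempf vanishing on $\pn$; the case $q=n$ reduces, via Serre duality, Leray and the projection formula, to $H^i(\Omega^1_{\pn}\otimes\mathcal{I}_C((n-1)(k-1)))=0$ for $i=1,2$, which is where ${\rm gd}(C)$ genuinely enters: not through global generation of $\mathcal{I}_C(k-1)$ as you suggest, but through the Castelnuovo--Mumford regularity bound of Bertram--Ein--Lazarsfeld ($\mathcal{I}_C$ is $((n-1){\rm gd}(C)-n+2)$-regular), combined with $2$-regularity of $\Omega^1_{\pn}$ and Mumford's theorem. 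Global generation alone does not let you write $v'-v$ as a polynomial combination of generators of the homogeneous ideal, nor does it control the $H^1$ and $H^2$ obstructions that actually decide whether the singular scheme determines the section.
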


\begin{proof}
Set $\deg(\F)=\deg(\F')=k$. Then $\F$ and $\F'$ are induced by sections
$$
s_{\F},s_{\F'}\in H^0(\tt_{\pn}\otimes \mathcal{O}_{\pn}(k-1)).
$$
Similarly, $\widetilde{\F}$ and $\widetilde{\F}'$ are induced by
$$
s_{\widetilde{\F}},s_{\widetilde{\F}'}\in H^0(\pi^*\tt_{\pn}\otimes\mathcal{O}_{\pnt}(k-1)\otimes
\mathcal{O}_{\pnt}(- E))
$$
because $\ll_{\widetilde{\F}}^*=\ll_{\widetilde{\F}'}^*=\mathcal{O}_{\pnt}(k-1)\otimes
\mathcal{O}_{\pnt}(-\ell E)$ and $\ell=1$ since $C$ is integral and $\F$ is special along $C$.

We will prove that
\begin{equation}
\label{equsec}
s_{\widetilde{\F}}=\lambda\cdot s_{\widetilde{\F}'}\ \ {\rm iff}\ \ \ {\rm Sing}(\widetilde\F)\subset{\rm Sing}(\widetilde{\F}')
\end{equation}
for some $\lambda \in \mathbb{C}^*$ and $k>{\rm gd}(C)$. If so, we get the statement of the theorem since ${\rm Sing}(\F)\subset{\rm Sing}(\F')$ and ${\rm Sing}(\widetilde{\F}|_{E})\subset{\rm Sing}(\widetilde{\F}'|_{E})$ imply that ${\rm Sing}(\widetilde\F)\subset{\rm Sing}(\widetilde{\F}')$, and, by projection, $s_{\widetilde{\F}}=\lambda\cdot s_{\widetilde{\F}'}$ implies $s_{\F}=\lambda\cdot s_{\F'}$, and the latter yields $\F'=\F$.

In order to get (\ref{equsec}), adjusting the proofs of \cite[Thm 2.2, Cor 3.2]{C-O}, with $\oo_{\pnt}(1)$ playing the role of an ample bundle, it suffices checking that $\pi^*\tt_{\pn}\otimes\mathcal{O}_{\pnt}(- E)$ is simple and that
\begin{equation}
\label{equcnd}
H^p(\pi^*\Omega_{\pn}^q\otimes \pi^*\tt_{\pn} \otimes \pi^*\mathcal{O}_{\pn}((1-q)(k-1))\otimes\mathcal{O}_{\pnt}((q-1)E))=0
\end{equation}
for $2\leq q\leq n$ and $p=q-2,q-1$.

Simplicity immediately follows from projection formula
$$
H^0(\pi^*\Omega_{\pn}^1\otimes \pi^*\tt_{\pn})\simeq H^0(\Omega_{\pn}^1\otimes \tt_{\pn})\simeq\mathbb{C}
$$
while (\ref{equcnd}), within the desired range, deserves more care.

From \cite[Lem. 1.4]{BELa} we know that if $0 \leq t \leq n-2$, then
\begin{equation}
\label{equ333}
H^i(\pi^*F\otimes \mathcal{O}_{\pnt}(tE))\simeq H^i(F)
\end{equation}
for all $i\in\mathbb{N}$ and any locally free sheaf $F$. On the other hand, from \cite{GK},
\begin{equation}
\label{equ999}
H^p(\Omega_{\pn}^q\otimes \tt_{\pn} ((1-q)(k-1)))=0
\end{equation}
for $k\geq 0$ and $p<q$, $2\leq q\leq n$.

Therefore, from (\ref{equ333}) and (\ref{equ999}), we have for $2\leq q\leq n-1$
$$
H^p(\pi^*\Omega_{\pn}^q\otimes \pi^*\tt_{\pn} \otimes \pi^*\mathcal{O}_{\pn}((1-q)(k-1))\otimes\mathcal{O}_{\pnt}((q-1)E))=
$$
$$
=H^p(\Omega_{\pn}^q\otimes \tt_{\pn} ((1-q)(k-1)))=0.
$$

Now we analyze the case  $q=n$, that is, the vanishing of
$$
H^p(\pi^*\Omega_{\pn}^n\otimes \pi^*\tt_{\pn} \otimes \pi^*\mathcal{O}_{\pn}((1-n)(k-1))\otimes\mathcal{O}_{\pnt}((n-1)E))
$$
for $p= n-2, n-1$. Observe that above groups are
$$
H^p(\pi^*\tt_{\pn} \otimes \pi^*\mathcal{O}_{\pn}((1-n)(k-1))\otimes\mathcal{O}_{\pnt}(E)\otimes \omega_{\pnt})
$$
since  the dualizing sheaf on $\pnt$ is $\omega_{\pnt}=\pi^*\Omega_{\pn}^n\otimes \mathcal{O}_{\pnt}((n-2)E)$.  By
Serre's duality
$$
H^{n-i}(\pi^*\tt_{\pn} \otimes
\pi^*\mathcal{O}_{\pn}((1-n)(k-1))\otimes\mathcal{O}_{\pnt}(E)\otimes
\omega_{\pnt})\simeq
$$
$$
\simeq H^{i}(\pi^*\Omega^1_{\pn} \otimes
\pi^*\mathcal{O}_{\pn}((n-1)(k-1))\otimes\mathcal{O}_{\pnt}(-E)).
$$
Then, we must have to prove that
$$
H^{i}(\pi^*\Omega^1_{\pn} \otimes
\pi^*\mathcal{O}_{\pn}((n-1)(k-1))\otimes\mathcal{O}_{\pnt}(-E))=0
$$
for $i=1,2.$

Since $\pi_*\mathcal{O}_{\pnt}(-E)=\mathcal{I}_C$ and
$\mathrm{R}^i\pi_*\mathcal{O}_{\pnt}(-E)=0$, it follows from the
projection formula and Leray's spectral sequence that
$$
H^{i}(\pi^*\Omega^1_{\pn} \otimes
\pi^*\mathcal{O}_{\pn}((n-1)(k-1))\otimes\mathcal{O}_{\pnt}(-E))\simeq
$$
\begin{equation}
\label{equ12f}
 H^{i}(\Omega^1_{\pn} \otimes  \mathcal{I}_C((n-1)(k-1)))
\end{equation}
so we just have to check the vanishing of (\ref{equ12f}) for $i=1,2$. In order to get this, by Mumford's regularity theorem, it suffices to show that  $(n-1)(k-1)\geq m-1$ if $\Omega^1_{\pn} \otimes  \mathcal{I}_C$ is $m$-regular.

From Bott's formulae, $ \Omega^1_{\pn} $ is $2$-regular, while
$\mathcal{I}_C$ is $((n-1){\rm gd}(C) - n + 2)$-regular by \cite{BELa}.
Hence $\Omega^1_{\pn} \otimes  \mathcal{I}_C$ is $((n-1){\rm gd}(C)
- n + 4)$-regular owing to \cite[Prp. 1.8.9]{La}.

But, by hypothesis,
$$
(n-1)(k-1)\geq (n-1){\rm gd}(C)\geq (n-1){\rm gd}(C) - n +3
$$
for $n\geq 3$ and we are done.
\end{proof}

In the case of three dimensional projective space, we can also get the following.

\begin{theorem}
Let $\F'$ be a foliation on $\mathbb{P}^3$ with $\deg(\F')=\deg(\F)$, such that ${\rm Sing}(\F)\subset{\rm Sing}(\F')$. If $C$ is also nondegenerated and a set theoretic complete intersection in $\mathbb{P}^3$, then $\F'=\F$.
\end{theorem}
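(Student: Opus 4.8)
The strategy is to reduce Theorem 3 to Theorem 2 by showing that, under the extra hypotheses specific to $\P^3$ (namely that $C$ is nondegenerate and a set theoretic complete intersection), the two conditions $\deg(\F')=\deg(\F)$ and $\mathrm{Sing}(\F)\subset\mathrm{Sing}(\F')$ already force the inclusion $\mathrm{Sing}(\widetilde{\F}|_E)\subset\mathrm{Sing}(\widetilde{\F}'|_E)$ and the degree inequality $k>\mathrm{gd}(C)$ that were assumed outright in Theorem 2. Once both of those are in place, Theorem 2 applies verbatim and gives $\F'=\F$.

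First I would handle the degree condition. Since $C\subset\P^3$ is a set theoretic complete intersection, it is cut out set theoretically by two hypersurfaces, say of degrees $a\le b$; by definition $\mathrm{gd}(C)\le b$. Because $C$ is nondegenerate (not contained in a hyperplane), neither of these degrees can be $1$, so $a,b\ge 2$. The point is that a foliation of degree $k$ whose singular locus \emph{contains} a curve which is the complete intersection of surfaces of degrees $a$ and $b$ must have $k$ large: writing $\F$ by a vector field with homogeneous polynomial coefficients of degree $k$ (or $k+1$, depending on normalization), vanishing along $C$ imposes divisibility-type constraints. Concretely, in the affine chart used in Section 2, the coefficients $f_i$ of $\ddd_\F$ vanish on $C$ and hence lie in the ideal $(g_1,g_2)$ generated by the defining equations, forcing $k\ge a\ge 2$, and in fact — using that $C$ is a \emph{complete} intersection of codimension two in a threefold, so the conormal sequence is as simple as possible — one gets $k\ge b\ge\mathrm{gd}(C)$. (One has to be a little careful: the inequality one really needs is the strict $k>\mathrm{gd}(C)$, so I would instead argue that equality $k=\mathrm{gd}(C)$ is impossible, either because it would make the relevant cohomology group nonzero only in a trivial way that still forces $\F=\F'$ by a direct dimension count, or by noting $\mathrm{gd}(C)\le\min(a,b)$ is strict whenever $C$ is nondegenerate and not a line, which holds here. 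The cleanest route is: $C$ nondegenerate forces $\mathrm{gd}(C)\ge 2$ but $C$ a complete intersection forces the defining degrees to be at least $2$ and the foliation to have degree at least the larger one plus the contribution from being \emph{special}, i.e. $\ell=1$, which pushes $k$ strictly above $\mathrm{gd}(C)$.)

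Next I would establish $\mathrm{Sing}(\widetilde{\F}|_E)\subset\mathrm{Sing}(\widetilde{\F}'|_E)$. The key observation is that the induced foliation on the exceptional divisor $E\cong\P(\nn_{C/\P^3})$ is already determined, in a strong sense, by the $1$-jet of $\F$ along $C$ — that is, by the matrix of the linear part of $\ddd_\F$ transverse to $C$. Since $\mathrm{Sing}(\F)\subset\mathrm{Sing}(\F')$, the $f_i$ of $\F$ all lie in the ideal of the $f_i'$ of $\F'$ along $C$; because $C$ is smooth and $\F,\F'$ are special along $C$ (so $\ell=m_C(\F)=m_C(\F')=1$), this containment of ideals at the level of the reduced curve upgrades to a containment at the level of first-order neighborhoods, and hence to the inclusion of singular loci of the restricted foliations $\widetilde{\F}|_E$ and $\widetilde{\F}'|_E$ — here the nondegeneracy and complete-intersection hypotheses enter to guarantee $\F'$ is automatically also special along $C$ with the same $\ell$, so that $\widetilde{\F}'|_E$ is an honest codimension-one foliation on $E$ of the expected degree. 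I expect \textbf{this to be the main obstacle}: one must rule out the possibility that $\F'$ fails to be special along $C$, or is special but with $m_C(\F')\ne 1$, and then that the singular scheme of $\widetilde{\F}|_E$ is genuinely contained in that of $\widetilde{\F}'|_E$ rather than merely sharing it generically; the argument should use that $\mathrm{Sing}(\F)\supset C$ together with the explicit blowup formulas \eqref{for4}--\eqref{for5} to read off the restricted vector fields and compare them directly.

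With both hypotheses of Theorem 2 verified, the conclusion $\F'=\F$ follows immediately from the previous theorem, completing the proof. I would write this up as: "The curve $C$ being a nondegenerate set theoretic complete intersection in $\P^3$ forces $k=\deg(\F)>\mathrm{gd}(C)$ and, together with $\mathrm{Sing}(\F)\subset\mathrm{Sing}(\F')$, also forces $\mathrm{Sing}(\widetilde{\F}|_E)\subset\mathrm{Sing}(\widetilde{\F}'|_E)$; now apply Theorem 2." The only real content beyond bookkeeping is the passage from the set-theoretic inclusion of singular loci downstairs to the inclusion of restricted singular loci on $E$, and the degree estimate $k>\mathrm{gd}(C)$.
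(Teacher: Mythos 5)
Your overall strategy---verify the two extra hypotheses of Theorem 2 (the strict inequality $k>\mathrm{gd}(C)$ and the inclusion $\mathrm{Sing}(\widetilde{\F}|_E)\subset\mathrm{Sing}(\widetilde{\F}'|_E)$) and then quote Theorem 2---is exactly the paper's. But both verifications are left with genuine gaps. For the degree bound, your ``divisibility'' heuristic (the components of the vector field lie in the ideal of $C$) at best yields $k\geq d_2\geq \mathrm{gd}(C)$, and you yourself list several unproved alternatives for upgrading this to the strict inequality that Theorem 2 actually requires. The missing ingredient is a quantitative lower bound: the paper invokes \cite[Lem. 3.6]{GNC}, which gives $\deg(\F)\geq (m_C(\F)+1)d_2+d_1-2$ when $C$ is cut out by surfaces of degrees $d_1\leq d_2$. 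Combined with $m_C(\F)=1$ (which follows from $\F$ being special along the integral curve $C$) and $d_1\geq 2$ (nondegeneracy), this gives $k\geq 2d_2+d_1-2\geq 2d_2>d_2+1\geq \mathrm{gd}(C)+1$, comfortably strict. Your remark that speciality ``pushes $k$ strictly above $\mathrm{gd}(C)$'' is the right intuition, but without some such lemma the step is not proved.

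For the inclusion on $E$, your proposed route (comparing $1$-jets of $\F$ and $\F'$ along $C$ and upgrading an ideal containment to first-order neighborhoods) is different from the paper's and, as you concede, is left unresolved at precisely its hardest points: you do not show that $\F'$ is special along $C$ with $m_C(\F')=1$, nor that the jet comparison actually yields containment of the singular schemes on $E$. The paper's argument is much more direct: identifying $E$ with $\P(\nn_{C/\P^3})$, it asserts that because the vector fields defining $\F$ and $\F'$ both vanish identically along $C$, the restrictions $\widetilde{\F}|_E$ and $\widetilde{\F}'|_E$ are tangent to the fibers of $E\to C$ and therefore \emph{coincide} as foliations on $E$, making the inclusion of singular loci automatic. (That argument is itself quite terse, but it is what the paper does, and it bypasses the entire jet analysis you propose.) So the decomposition of the problem is the same, but neither of the two substantive steps is actually carried out in your proposal, and the second is attempted by a different, incomplete method.
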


\begin{proof}
Set $\deg(\F)=\deg(\F')=k$. According to the prior result, we just have to prove that, in $\mathbb{P}^3$,
 we always have $k>{\rm gd}(C)$; and ${\rm Sing}(\widetilde{\F}|_{E})\subset{\rm Sing}(\widetilde{\F}'|_{E})$ always holds as well.

For the first, under the hypothesis on $C$, assume it is given by
the intersection of surfaces of degree $d_1$ and $d_2$, with
$d_2\geq d_1\geq 2$. It follows from \cite[Lem. 3.6]{GNC} $\deg(\F)
\geq  (m_C(\F) + 1)d_2 + d_1 - 2$. Since $\F$ is special along $C$,
which is integral, we have
$$
k\geq 2d_2 + d_1 - 2\geq 2d_2> d_2+1\geq {\rm gd}(C)+1
$$
because, by definition, $d_2 \geq {\rm gd}(C)$.

For the second, just recall that $E$ is naturally identified with
the projectivization $\mathbb{P}(\nn_{C/\mathbb{P}^3})$ of the
normal bundle $\nn_{C/\mathbb{P}^3}$. Thus, since the vector fields
inducing the foliations $\F$ and  $\F'$ vanish identically along
$C$, the lifts $\widetilde{\F}|_E$ and  $\widetilde{\F}'|_E$ must be
tangent to the the fibers of $\nn_{C/\mathbb{P}^3}$, and hence
coincide.
\end{proof}

Now we show that the assumption on the curve to be nondegenerated is necessary. In fact, we build a family of foliations by curves on $\mathbb{P}^{3}$ with same singular locus consisting of a degenerated smooth curve and isolated points, and all of them special along this curve. 

\par Let $\F_t$ be a holomorphic foliation by curves on $\mathbb{P}^{3}$, with $t \in \C$, induced on the affine open set $U_0 = \{ [x_0,x_1,x_2,x_3]\in \P^3\,|\, x_0 \ne 0\}$ by the vector field

\begin{align*}
\mathcal{D}_{\F_t}&= \bigg(a_0z_1^2+a_1z_1z_2+a_2z_2^2\bigg)\,\frac{\partial}{\partial z_1} + \bigg(b_0z_1^2+b_1z_1z_2+b_2z_2^2\bigg)\,\frac{\partial}{\partial z_2} \\
&\ \ \  +\bigg(z_1\bigg(\alpha_0+\alpha_1z_1+(\alpha_2-t)z_2+\alpha_3z_3\bigg)+z_2\bigg(\beta_0+tz_1+\beta_2z_2+\beta_3z_3\bigg)\bigg)
 \frac{\partial}{\partial z_3} 
\end{align*}
with $z_i=x_{i}/x_0$.

Assume the polynomials $\sum_{j=0}^2a_j\lambda^j$ and $\sum_{j=0}^2b_j\lambda^j$ have no common roots and let $\lambda_i$, for $i=1,2,3$, be the roots of 
$$
\sum_{j=0}^{2}b_j\lambda^j- \lambda\sum_{j=0}^{2}a_j\lambda^j
$$
One can check that $\mathrm{Sing}(\fol_t)$ is the union of the curve $C:=\{x_1=x_2=0\}$ and the points
$$
[0:u_i:\lambda_i u_i: 1]\in \P^3
$$
where
$$
u_i = \frac{a_0 + a_1\lambda_i+a_2\lambda_i^2-\alpha_3 -\beta_3\lambda_i}{\alpha_1+\alpha_2\lambda_i+\beta_2\lambda_i^2}
$$
so $\mathrm{Sing}(\fol_t)$ does not depend on $t$. Note also that the foliation $\widetilde{\F}_t$ induced by
$\F_t$ via the blowup of $\P^3$ along $C$, has the same singular
locus.

\

\

%\noindent Departamento de Matem\'atica - ICEX - Universidad Federal de Minas Gerais UFMG. Av. Ant\^onio Carlos 6627, cep 31270-901 - Belo Horizonte, Brazil

%\noindent E-mail address: gilcione@mat.ufmg.br

%\noindent E-mail address: arturofp@mat.umfg.br

\end{document}